\newcommand{\ignore}[1]{}
\newcommand*\Let[2]{\State #1 $\gets$ #2}
\algrenewcommand\algorithmicrequire{\textbf{Input:}}
\newcommand*\rot{\rotatebox{90}}
\DeclareMathOperator*{\argmin}{arg\,min}
\DeclareMathOperator*{\rank}{rank}
\DeclareMathOperator*{\sparsity}{sp}
\DeclareMathOperator*{\conv}{conv}
\newtheorem{lemma}{Lemma}
\newtheorem{theorem}{Theorem}
\newtheorem{assumption}{Assumption}
\title{Fast Multilevel Algorithms for Compressive Principle Component Pursuit}
\author{Vahan Hovhannisyan\thanks{Department of Computing, Imperial College London, 180 Queen's Gate, SW7 2AZ London, UK
(\href{mailto:vh13@imperial.ac.uk}{vh13@imperial.ac.uk}, \href{mailto:i.panagakis@imperial.ac.uk}{vh13@imperial.ac.uk}, \href{mailto:p.parpas@imperial.ac.uk}{p.parpas@imperial.ac.uk}, \href{mailto:s.zafeiriou@imperial.ac.uk}{s.zafeiriou@imperial.ac.uk}).} \and Yannis Panagakis\footnotemark[2] \and Panos Parpas\footnotemark[2] \and Stefanos Zafeiriou\footnotemark[2]}
\begin{document}

\maketitle

\begin{abstract}
Recovering a low-rank matrix from highly corrupted measurements arises in compressed sensing of structured high-dimensional signals (e.g., videos and hyperspectral images among others). Robust principal component analysis (RPCA), solved via principal component pursuit (PCP), recovers a low-rank matrix from sparse corruptions that are of unknown value and support by decomposing the observation matrix into two terms: a low-rank matrix and a sparse one, accounting for sparse noise and outliers. In the more general setting, where only a fraction of the data matrix has been observed, low-rank matrix recovery is achieved by solving the compressive principle component pursuit (CPCP). Both PCP and CPCP are well-studied convex programs, and numerous iterative algorithms have been proposed for their optimisation. Nevertheless, these algorithms involve singular value decomposition (SVD) at each iteration, which renders their applicability challenging in the case of massive data. In this paper, we propose a multilevel approach for the solution of PCP and CPCP problems. The core principle behind our algorithm is to apply SVD in models of lower-dimensionality than the original one and then lift its solution to the original problem dimension. We show that the proposed algorithms are easy to implement, converge at the same rate but with much lower iteration cost. Numerical experiments on numerous synthetic and real problems indicate that the proposed multilevel algorithms are several times faster than their original counterparts, namely PCP and CPCP.
\end{abstract}

\section{Introduction}

Low-rank matrix recovery is a cornerstone in data analysis and dimensionality reduction, with the principal component analysis (PCA) \cite{hotelling1933analysis} being the most widely employed method for this task. However, PCA is fragile to the presence of gross, non-Gaussian, noise and outliers, and the estimated low-rank subspace may be arbitrarily away from the true one; even when a small fraction of the data is corrupted \cite{huber2011robust}. To alleviate this drawback, robust PCA (RPCA) models have been proposed \cite{candes2011robust}. RPCA aims to recover a low-rank matrix from sparse corruptions that are of unknown value and support by decomposing the observation matrix ($\mathbf{D}$) into two parts, namely $\mathbf{D} = \mathbf{L} + \mathbf{S}$. The first part is a low-rank matrix ($\mathbf{L}$) and the second part is a sparse matrix ($\mathbf{S}$) that accounts for sparse noise and outliers. In case of partially observed data, the RPCA model is extended to consider the following decomposition \cite{wright2013compressive}: $\mathbf{D}\doteq\mathcal{P}_\mathcal{Q}[\mathbf{M}]=\mathcal{P}_\mathcal{Q}[\mathbf{L}+\mathbf{S}]$, where $\mathcal{Q}\subseteq\mathbb{R}^{m\times n}$ is a linear subspace and $\mathcal{P}_\mathcal{Q}$ denotes the projection operator onto that subspace. The aforementioned low-rank matrix recovery models have profound impact in visual data analysis and computer vision applications such as image denoising \cite{candes2011robust}, background subtraction, image alignment \cite{peng2012rasl}, texture recovery \cite{zhang2012tilt}, deformable models \cite{sagonas2014raps}, face frontalization \cite{sagonas2016robust}, and structure from motion \cite{angst2011generalized}, to mention but a few examples.

A natural approach to estimate the low-rank and sparse components in the above mentioned models is to minimise the rank of $\mathbf{L}$ and the number on non-zero entries of $\mathbf{S}$, measured by the $\ell_0$ quasi norm \cite{candes2011robust}. Unfortunately, both rank and $\ell_0$-norm minimisation are NP-hard \cite{vandenberghe1996semidefinite, natarajan1995sparse}. The nuclear- and the $\ell_1$- norms are typically adopted as convex surrogates to rank and $\ell_0$- norm, respectively yielding the convex \textit{principle component pursuit} (PCP) \cite{candes2011robust} and \textit{compressive principle component pursuit} (CPCP) programs \cite{wright2013compressive}.

Common solvers for the convex PCP and CPCP models include: Iterative Thresholding (IT) \cite{daubechies2004iterative}, Accelerated Proximal Gradient (APG)  \cite{parikh2013proximal}, Augmented Lagrange Multipliers (ALM) \cite{lin2010augmented} and Linearized Augmented Lagrangian method \cite{yang2013linearized}. However, all these solvers exhibit significant computational drawbacks. In particular, at each iteration, they require computing several (not necessarily all) singular values and vectors of a large matrix, which is computationally expensive.

There have been several attempts to reduce the computational cost of large nuclear-norm regularised optimisation problems. Concretely, \cite{liu2012active} proposed to reduce the dimensions of the problem by factorising the low-rank matrix 
as the product of two smaller matrices, resulting in a non-convex problem which is solved by employing the augmented Lagrangian alternating direction method. Another very popular approach for reducing the dimensions of large-scale problems is to create smaller sub-problems by applying randomised techniques \cite{allen2016even, drineas2006fast, liu2014linear, musco2015randomized, oh2015fast}. More recently, the Frank-Wolfe (FW) algorithm has regained popularity for solving large-scale problems due to its extremely low iteration complexity. Specifically, the Frank-Wolfe Thresholding (FW-T) method proposed in \cite{mu2016scalable} is arguably the most efficient method for solving large (C)PCP problems. Nevertheless, FW type of methods require significantly more iterations to converge, and hence they can be impractically time consuming.
 
In this paper, motivated by the recent advances in multilevel optimisation algorithms \cite{ho2016multilevel, hovhannisyan2016magma, javaherian2017multi, nash2000multigrid,  campos2018multigrid, parpas2017multilevel}, we propose a simple, yet generic and very effective multilevel approach for significantly reducing computational costs for many problems that require solving nuclear norm based oracles, including RPCA models such as the PCP and CPCP. The core of our proposed methodology is to construct and solve lower dimensional (coarse) models for each optimisation oracle and then lift its solution to the original problem dimension. We show that using appropriately chosen restriction and prolongation operators result in algorithms that converge to an (approximate) solution of the original problem. We apply the proposed multilevel approach on two state-of-the-art algorithms, namely the Inexact Augmented Lagrange Multiplier Method (IALM) for the PCP problem \cite{lin2010augmented}, and the Frank-Wolfe Thresholding method for the more general CPCP model \cite{mu2016scalable}.
In particular, our main contributions are:
\begin{itemize}
\item In section \ref{sec:ml-ialm} we show that the proposed multilevel IALM algorithm converges to an approximate solution, and in sections \ref{sec:video} and \ref{sec:shadow} we show that in practice it is several times faster than the standard IALM. 
\item The first provably convergent variant of IALM with approximate updates (Theorem \ref{th:mlialm-convergence}).
\item In section \ref{sec:ml-fwt} we show that for the FW-T method we prove that its multilevel variant converges in function value with the same worst-case iteration complexity (Theorem \ref{th:mlfw-convergence}). However, in sections \ref{sec:video} and \ref{sec:shadow} we show that in practice each iteration of the multilevel algorithm is up to two times cheaper.
\item Numerical tests in both synthetic and real data indicate that the proposed multilevel variants solve large-scale problems two times faster than their standard counterparts (Section \ref{sec:experiments}). 
\end{itemize}

\textit{Notation.} Throughout the paper, scalars are denoted by lower-case letters, vectors (matrices) are denoted by lowercase (upper-case) boldface letters, e.g. $\mathbf{x}$ ($\mathbf{X}$). $\mathbf{I}$ denotes the identity matrix with appropriate dimension. The $\ell_1$ and $\ell_2$ norms of a vector $\mathbf{x}$ are defined as $\Vert \mathbf{x}\Vert_1=\sum_{i}\vert x_i\vert$, where $\vert\cdot\vert$ denotes the absolute value operator, and $\Vert \mathbf{x}\Vert_2=\sqrt{\sum_i x_i^2}$, respectively. The matrix $\ell_1$ norm is defined as $\Vert \mathbf{X}\Vert_1 = \sum_i\sum_j \vert x_{ij}\vert$. The Frobenius norm is defined as $\Vert\mathbf{X}\Vert_F=\sqrt{\sum_i\sum_j x_{ij}^2}$, and the nuclear norm of $\mathbf{X}$ (i.e., the sum of singular values of a matrix) is denoted by $\Vert \mathbf{X}\Vert_*$. The $l$-th largest singular value of matrix $\mathbf{X}$ is denoted as $\sigma_l(\mathbf{X})$. In algorithm pseudocodes we use $\mathbf{X}^{(k)}$ ($u_k$) to denote the value of matrix $\mathbf{X}$ (scalar $u$) at iteration $k$.

\section{Compressive Principle Component Pursuit and Robust PCA}

In this section, we give a formal presentation of the CPCP \cite{wright2013compressive} model. Let 
$\mathcal{Q}\subseteq\mathbb{R}^{m\times n}$ be a linear subspace spanned by the set of sensing matrices, and $\mathcal{P}_{\mathcal{Q}}$ denote the projection operator onto that subspace. In many applications $\mathcal{Q}$ is the subset of observed values of data $\mathbf{D}$. Then the problem is to find a low rank matrix $\mathbf{L}^\star$ and a sparse matrix $\mathbf{S}^\star$ such that the $\mathcal{P}_{\mathcal{Q}}[\mathbf{L}^\star+\mathbf{S}^\star] = \mathcal{P}_{\mathcal{Q}}[\mathbf{D}]$. The problem can be written as a convex unconstrained minimisation problem as follows:

\begin{equation} \label{eq:cpcp}
    \min_{\mathbf{L},\mathbf{S}} \frac{1}{2} \Vert\mathcal{P}_{\mathcal{Q}}[\mathbf{L}+\mathbf{S}-\mathbf{D}]\Vert_F^2+\lambda_L\Vert\mathbf{L}\Vert_* + \lambda_S\Vert\mathbf{S}\Vert_1,
\end{equation}
where $\lambda_L$ and $\lambda_S$ are positive penalising coefficients.

An important special case of (\ref{eq:cpcp}) is the well known PCP problem robust principle component analysis (RPCA), where $\mathcal{Q}$ is the entire space $\mathbb{R}^{m\times n}$ and $\mathcal{P}_\mathcal{Q}$ is the identity, i.e. all values of $\mathbf{D}$ have been observed. The problem in this case can be formulated to represent the input data matrix $\mathbf{D}\in\mathbb{R}^{m\times n}$ as a sum of a low rank matrix $\mathbf{L}^\star$ and a sparse matrix $\mathbf{S}^\star$. This can be exactly solved via the following convex constrained optimisation problem:
\begin{equation} \label{eq:rpca}
    \min_{\mathbf{L}, \mathbf{S}}  \Vert \mathbf{L}\Vert_* + \lambda \Vert \mathbf{S}\Vert_1 , \quad subject\; to \quad \mathbf{D}=\mathbf{L}+\mathbf{S},
\end{equation}
where $\lambda>0$ is a weighting parameter.

Before proceeding in the presentation of the proposed multilevel algorithms for the PCP and CPCP problems, we will provide an overview of the most widely adopted solvers for these problems

\subsection{Inexact ALM for Robust PCA}

We start with the simpler PCP problem for RPCA. A classical approach for solving (\ref{eq:rpca}) is by minimising its augmented Lagrangian defined as
\begin{equation} \label{eq:aug-lagr}
    \mathcal{L}(\mathbf{L},\mathbf{S},\mathbf{Y},\mu) = \Vert \mathbf{L}\Vert_* + \lambda\Vert \mathbf{S}\Vert_1 + \langle\mathbf{Y},\mathbf{D}-\mathbf{L}-\mathbf{S}\rangle + \frac{\mu}{2} \Vert \mathbf{D}-\mathbf{L}-\mathbf{S}\Vert_F^2,
\end{equation}
where $\mathbf{Y}\in\mathbb{R}^{m\times n}$ is the Lagrangian variable and $\mu>0$ is a penalty parameter. The convex optimization model in (\ref{eq:aug-lagr}) can be solved via alternating directions method. The latter method first solves the problem for each primal variable $\mathbf{L}$ and $\mathbf{S}$ separately for a fixed $\mathbf{Y}$. The dual variable $\mathbf{Y}$ is updated according to a linear rule and $\mu_k$ is chosen as an increasing sequence at each iteration \cite{lin2010augmented}. The method is computationally attractive because each resulting subproblem has a closed form solution. The resulting procedure was dubbed Inexact ALM (IALM) in \cite{lin2010augmented} and is formally given here in Algorithm \ref{alg:ialm}. 

\begin{algorithm}
  \caption{Inexact ALM (IALM) \label{alg:ialm}}
  \begin{algorithmic}[1]
    \Require{$\mathbf{D}, \mathbf{S}^{(0)}, \mathbf{Y}^{(0)} \in\mathbb{R}^{m\times n}$; $\mu_0>0$}
    \For{$k \gets 1 \textrm{ to } ...$}
      \State // Solve $\mathbf{L}^{(k+1)}=\argmin\limits_{\mathbf{L}} \mathcal{L} (\mathbf{L},\mathbf{S}^{(k)},\mathbf{Y}^{(k)},\mu_k)$
      \Let{$\mathbf{M}^{(k)}$}{$\mathbf{D}-\mathbf{S}^{(k)} + \mu_k^{-1}\mathbf{Y}^{(k)}$}
      \Let{$(\mathbf{U}, \mathbf{\Sigma}, \mathbf{V})$}{SVD($\mathbf{M}^{(k)}$)}
      \Let{$\mathbf{L}^{(k+1)}$}{$\mathbf{U}\mathcal{S}_{\mu_k^{-1}}[\mathbf{\Sigma}]\mathbf{V}^\top$}
      \State // Solve $\mathbf{S}^{(k+1)}=\argmin\limits_{\mathbf{S}} \mathcal{L}(\mathbf{L}^{(k+1)},\mathbf{S},\mathbf{Y}^{(k)},\mu_k)$
      \Let{$\mathbf{S}^{(k+1)}$}{$\mathcal{S}_{\lambda\mu_k^{-1}}[\mathbf{D}-\mathbf{L}^{(k+1)}+\mu_k^{-1}\mathbf{Y}^{(k)}]$}
      \State // Update the Lagrangian variable
      \Let{$\mathbf{Y}^{(k+1)}$}{$\mathbf{Y}^{(k)} + \mu_k (\mathbf{D}-\mathbf{L}^{(k+1)}-\mathbf{S}^{(k+1)})$} \State Update $\mu_k \leftarrow \mu_{k+1}$
    \EndFor \\
    \Return{$(\mathbf{L}^{(k+1)}, \mathbf{S}^{(k+1)})$}
  \end{algorithmic}
\end{algorithm}

Minimising (\ref{eq:aug-lagr}) over $\mathbf{L}$ requires computing singular values of a large $m\times n$ matrix. It is well known that computing the $k$ largest singular values has a computational complexity of $\mathcal{O}(kmn)$. Thus for practical efficiency it is important to compute only a few singular values \cite{lin2010augmented}. However, the SVD in step $4$ remains the computational bottleneck of Algorithm \ref{alg:ialm}. Theorem \ref{th:ialm-convergence} \cite{lin2010augmented} gives an asymptotic convergence result for Algorithm \ref{alg:ialm}.

\begin{theorem} \label{th:ialm-convergence}
For Algorithm \ref{alg:ialm}, if $\mu_k$ is non-decreasing and $\sum_{k=1}^{+\infty}\mu_k^{-1}=+\infty$, then $(\mathbf{L}^{(k)}, \mathbf{S}^{(k)})$ asymptotically converges to an optimal solution of the RPCA problem.
\end{theorem}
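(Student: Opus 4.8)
The plan is to treat Algorithm~\ref{alg:ialm} as a two-block alternating-directions augmented Lagrangian scheme with a non-decreasing (possibly unbounded) penalty and to run a Fej\'er-type argument against a saddle point of~(\ref{eq:aug-lagr}). The first thing I would exploit is that \emph{both} inner updates are exact minimisations: singular value thresholding in steps~3--5 is the proximal map of $\mu_k^{-1}\Vert\cdot\Vert_*$, and step~7 is the proximal map of $\lambda\mu_k^{-1}\Vert\cdot\Vert_1$. Writing out the first-order conditions and introducing the auxiliary multiplier $\hat{\mathbf{Y}}^{(k+1)}:=\mathbf{Y}^{(k)}+\mu_k(\mathbf{D}-\mathbf{L}^{(k+1)}-\mathbf{S}^{(k)})$, I would record $\hat{\mathbf{Y}}^{(k+1)}\in\partial\Vert\mathbf{L}^{(k+1)}\Vert_*$ and $\mathbf{Y}^{(k+1)}\in\lambda\,\partial\Vert\mathbf{S}^{(k+1)}\Vert_1$, together with the bookkeeping identities $\hat{\mathbf{Y}}^{(k+1)}-\mathbf{Y}^{(k+1)}=\mu_k(\mathbf{S}^{(k+1)}-\mathbf{S}^{(k)})$ and $\mathbf{D}-\mathbf{L}^{(k+1)}-\mathbf{S}^{(k+1)}=\mu_k^{-1}(\mathbf{Y}^{(k+1)}-\mathbf{Y}^{(k)})$. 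Since the subdifferential of a norm at any point lies in the unit ball of the dual norm, $\Vert\hat{\mathbf{Y}}^{(k+1)}\Vert_2\le 1$ and $\Vert\mathbf{Y}^{(k+1)}\Vert_\infty\le\lambda$, so $\{\mathbf{Y}^{(k)}\}$ and $\{\hat{\mathbf{Y}}^{(k)}\}$ are bounded; boundedness of the primal iterates $\{\mathbf{L}^{(k)}\},\{\mathbf{S}^{(k)}\}$ I would obtain separately (non-expansiveness of the thresholding maps together with the residual estimate below, or the subgradient inequalities).

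Next I would invoke that~(\ref{eq:rpca}) is convex with one affine equality constraint and full-domain coercive objective, so an optimal primal--dual triple $(\mathbf{L}^\star,\mathbf{S}^\star,\mathbf{Y}^\star)$ exists with $\mathbf{L}^\star+\mathbf{S}^\star=\mathbf{D}$, $\mathbf{Y}^\star\in\partial\Vert\mathbf{L}^\star\Vert_*$ and $\mathbf{Y}^\star\in\lambda\,\partial\Vert\mathbf{S}^\star\Vert_1$. Monotonicity of $\partial\Vert\cdot\Vert_*$ and $\lambda\,\partial\Vert\cdot\Vert_1$ then gives
\[
\langle\hat{\mathbf{Y}}^{(k+1)}-\mathbf{Y}^\star,\,\mathbf{L}^{(k+1)}-\mathbf{L}^\star\rangle\ge 0,\qquad \langle\mathbf{Y}^{(k+1)}-\mathbf{Y}^\star,\,\mathbf{S}^{(k+1)}-\mathbf{S}^\star\rangle\ge 0 .
\]
Adding these, substituting the two identities, and completing the square, I expect to reach an estimate of the form
\[
\Vert\mathbf{Y}^{(k+1)}-\mathbf{Y}^\star\Vert_F^2+\mu_k^2\Vert\mathbf{S}^{(k+1)}-\mathbf{S}^\star\Vert_F^2 \;\le\; \Vert\mathbf{Y}^{(k)}-\mathbf{Y}^\star\Vert_F^2+\mu_k^2\Vert\mathbf{S}^{(k)}-\mathbf{S}^\star\Vert_F^2-\Vert\hat{\mathbf{Y}}^{(k+1)}-\mathbf{Y}^{(k)}\Vert_F^2 .
\]
Here the non-decreasing hypothesis is used to align the $\mu_k^2$-weights across consecutive iterates (controlling the extra term produced by a strict increase of the penalty via boundedness of $\{\mathbf{S}^{(k)}\}$), so that the Lyapunov quantity $\Vert\mathbf{Y}^{(k)}-\mathbf{Y}^\star\Vert_F^2+\mu_{k-1}^2\Vert\mathbf{S}^{(k)}-\mathbf{S}^\star\Vert_F^2$ converges and $\sum_k\Vert\hat{\mathbf{Y}}^{(k+1)}-\mathbf{Y}^{(k)}\Vert_F^2<\infty$. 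Consequently $\hat{\mathbf{Y}}^{(k+1)}-\mathbf{Y}^{(k)}\to 0$, hence $\mathbf{Y}^{(k+1)}-\mathbf{Y}^{(k)}\to 0$ and $\hat{\mathbf{Y}}^{(k+1)}-\mathbf{Y}^{(k+1)}\to 0$; since $\mu_k\ge\mu_0>0$, the primal residual $\mathbf{D}-\mathbf{L}^{(k)}-\mathbf{S}^{(k)}=\mu_{k-1}^{-1}(\mathbf{Y}^{(k)}-\mathbf{Y}^{(k-1)})\to 0$.

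Finally, combining the subgradient inequalities for $\Vert\cdot\Vert_*$ and $\lambda\Vert\cdot\Vert_1$ at the iterates (subgradients $\hat{\mathbf{Y}}^{(k+1)}$, $\mathbf{Y}^{(k+1)}$) and at the optimum (subgradient $\mathbf{Y}^\star$) with $\mathbf{L}^\star+\mathbf{S}^\star=\mathbf{D}$ and the identities of the first paragraph, I would sandwich the objective gap $h(\mathbf{L}^{(k+1)},\mathbf{S}^{(k+1)})-h^\star$, where $h(\mathbf{L},\mathbf{S})=\Vert\mathbf{L}\Vert_*+\lambda\Vert\mathbf{S}\Vert_1$, between $-\mu_k^{-1}\langle\mathbf{Y}^\star,\mathbf{Y}^{(k+1)}-\mathbf{Y}^{(k)}\rangle$ and $-\mu_k^{-1}\langle\mathbf{Y}^{(k+1)},\mathbf{Y}^{(k+1)}-\mathbf{Y}^{(k)}\rangle+\langle\hat{\mathbf{Y}}^{(k+1)}-\mathbf{Y}^{(k+1)},\mathbf{L}^{(k+1)}-\mathbf{L}^\star\rangle$; both bounds vanish by the previous steps, so $h(\mathbf{L}^{(k)},\mathbf{S}^{(k)})\to h^\star$. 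Boundedness of $\{(\mathbf{L}^{(k)},\mathbf{S}^{(k)})\}$ then makes every accumulation point feasible and of value $h^\star$, i.e.\ optimal; and because the Lyapunov quantity above converges for \emph{every} optimal multiplier — in particular for one attached to such an accumulation point — a standard Opial/Fej\'er argument upgrades subsequential convergence to convergence of the whole sequence $(\mathbf{L}^{(k)},\mathbf{S}^{(k)})$ to an optimal solution of~(\ref{eq:rpca}).

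The two points I expect to be genuinely delicate are: (i) proving boundedness of the primal sequences $\{\mathbf{L}^{(k)}\},\{\mathbf{S}^{(k)}\}$, which is used repeatedly above; and (ii) pinning down exactly how the hypothesis $\sum_k\mu_k^{-1}=+\infty$ (beyond non-decreasingness) is consumed — I anticipate it is precisely what keeps the telescoped estimate from being overwhelmed when the penalty grows, i.e.\ it is the condition that lets the cumulative Lagrange-multiplier corrections close the feasibility and duality gaps without the stronger, and here unnecessary, requirement $\mu_k\to+\infty$.
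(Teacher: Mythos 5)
The paper does not actually prove this theorem: it is quoted from \cite{lin2010augmented}, and the architecture you outline (exact proximal subproblems, the auxiliary multiplier $\widehat{\mathbf{Y}}^{(k+1)}$, boundedness of the duals via the dual-norm unit ball, monotonicity of the subdifferentials against a saddle point, a telescoped Fej\'er estimate, then feasibility and objective-gap limits) is exactly the argument of that reference, which the paper later adapts in Lemmas \ref{l:bounded-sequences}--\ref{l:lemma4} and Theorem \ref{th:mlialm-convergence}. So the route is the intended one, but one step fails as written and it propagates.

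Your Lyapunov inequality carries the weights on the wrong factors. The identity of Lemma 2 of \cite{lin2010augmented} (restated as Lemma \ref{l:lemma2} here) telescopes in the normalisation
\begin{equation}
\Vert\mathbf{S}^{(k+1)}-\mathbf{S}^\star\Vert_F^2+\mu_k^{-2}\Vert\mathbf{Y}^{(k+1)}-\mathbf{Y}^\star\Vert_F^2\;\leq\;\Vert\mathbf{S}^{(k)}-\mathbf{S}^\star\Vert_F^2+\mu_k^{-2}\Vert\mathbf{Y}^{(k)}-\mathbf{Y}^\star\Vert_F^2-\cdots,
\end{equation}
precisely because non-decreasingness gives $\mu_{k+1}^{-2}\leq\mu_k^{-2}$, so the weight can be tightened from one iterate to the next at no cost. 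In your normalisation (multiplied through by $\mu_k^2$) the weight $\mu_k^2$ \emph{increases} with $k$, and the correction term $(\mu_{k+1}^2-\mu_k^2)\Vert\mathbf{S}^{(k+1)}-\mathbf{S}^\star\Vert_F^2$ is not summable when $\mu_k\to\infty$ (e.g.\ $\mu_k\propto k$, which is admissible under $\sum_k\mu_k^{-1}=+\infty$); boundedness of $\{\mathbf{S}^{(k)}\}$ does not rescue it. Consequently the conclusions you extract --- $\widehat{\mathbf{Y}}^{(k+1)}-\mathbf{Y}^{(k)}\to 0$, hence $\mathbf{Y}^{(k+1)}-\mathbf{Y}^{(k)}\to 0$ and $\widehat{\mathbf{Y}}^{(k+1)}-\mathbf{Y}^{(k+1)}\to 0$ --- do not follow. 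The correct normalisation yields only $\mathbf{S}^{(k+1)}-\mathbf{S}^{(k)}\to 0$ and $\mu_k^{-1}(\mathbf{Y}^{(k+1)}-\mathbf{Y}^{(k)})\to 0$ (which is enough for asymptotic feasibility), while $\widehat{\mathbf{Y}}^{(k+1)}-\mathbf{Y}^{(k+1)}=\mu_k(\mathbf{S}^{(k+1)}-\mathbf{S}^{(k)})$ is merely bounded. This breaks your final sandwich: the term $\langle\widehat{\mathbf{Y}}^{(k+1)}-\mathbf{Y}^{(k+1)},\mathbf{L}^{(k+1)}-\mathbf{L}^\star\rangle$ in the upper bound does not vanish ``by the previous steps.'' The repair is the point you flag in (ii) but never execute: the telescoping gives $\sum_k\mu_k^{-1}c_k<+\infty$ with $c_k\geq 0$ the monotonicity cross-terms, and the hypothesis $\sum_k\mu_k^{-1}=+\infty$ forces $c_{k_j}\to 0$ along a subsequence; the objective gap is closed along that subsequence, and the Opial/Fej\'er upgrade to convergence of the whole sequence must then be run on the correctly weighted quantity.
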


\subsection{Frank-Wolfe Method}

In this section, we review the more general CPCP problem and the well studied Frank-Wolfe (FR) method \cite{frank1956algorithm}, also known as the conditional gradient method \cite{levitin1966constrained} and its associated convergence result. Since the method minimises any convex smooth function $f$ over a bounded convex set $\mathcal{D}\subseteq\mathcal{H}$, where $\mathcal{H}$ is a Hilbert space endowed with an inner product $\langle\cdot,\cdot\rangle$, we study the more general optimisation problem:
\begin{equation} \label{eq:fw-problem}
\begin{array}{ccc}
    \min f(\mathbf{x}) & \text{s.t} & \mathbf{x}\in\mathcal{D},
\end{array}
\end{equation}
where $f$ has a $L$-Lipschitz continues gradient:
\begin{equation}
\begin{array}{cc}
    \forall \mathbf{x},\mathbf{y}\in\mathcal{D}, & \Vert \nabla f(\mathbf{x})-\nabla f(\mathbf{y})\Vert\leq L\Vert\mathbf{x}-\mathbf{y}\Vert.
\end{array}
\end{equation}
Throughout, we let $D=\max_{\mathbf{x},\mathbf{y}\in\mathcal{D}}\Vert\mathbf{x}-\mathbf{y}\Vert$ denote the diameter of the feasible set $\mathcal{D}$. The Frank-Wolfe method, as well as all its variants and extensions studied in this work will assume that the feasibility set $\mathcal{D}$ is bounded, i.e. $D<\infty$. The classical Frank-Wolfe method has many variants with different update rules, but in the most general form it can be written as in Algorithm \ref{alg:fw} \cite{mu2016scalable}.

\begin{algorithm}
  \caption{Frank-Wolfe (FW) \label{alg:fw}}
  \begin{algorithmic}[1]
    \Require{$\mathbf{x}^{(0)}\in\mathcal{D}$}
    \For{$k \gets 1 \textrm{ to } ...$}
      \State $\mathbf{v}^{(k)}\in \argmin_{\mathbf{v}\in\mathcal{D}}\langle\mathbf{v},\nabla f(\mathbf{x}^{(k)})\rangle;$
      \State $\gamma=\frac{2}{k+2}$
      \State Update $\mathbf{x}^{(k+1)}$ to a point in $\mathcal{D}$ so that $f(\mathbf{x}^{(k+1)})\leq f(\mathbf{x}^{(k)}+\gamma(\mathbf{v}^{(k)}-\mathbf{x}^{(k)}));$
    \EndFor \\
    \Return{$\mathbf{x}^{(k+1)}$}
  \end{algorithmic}
\end{algorithm}

The two most common updating rules for $\mathbf{x}^{(k+1)}$ are the simple

\begin{equation}
    \mathbf{x}^{(k+1)}=\mathbf{x}^{(k)}+\gamma(\mathbf{v}^{(k)}-\mathbf{x}^{(k)}),
\end{equation}

and the following slightly more sophisticated one
\begin{equation} \label{eq:fw-update}
\begin{array}{ccc}
    \mathbf{x}^{(k+1)}\in \argmin_{\mathbf{x}} f(\mathbf{x}) & \text{s.t.} & \mathbf{x}\in\conv\{\mathbf{x}^{(k)},\mathbf{v}^{(v)}\}.
\end{array}
\end{equation}

In this paper we will use the more advanced update rule (\ref{eq:fw-update}) for its better practical performance. Using standard techniques it can be shown that the FW method converges at a rate of $\mathcal{O}(1/k)$ in function values.

\begin{theorem}
Let $\mathbf{x}^\star$ be an optimal solution of (\ref{eq:fw-problem}). For $\{\mathbf{x}^{(k)}\}$ generated by Algorithm \ref{alg:fw}, we have for $k=0,1,2,..$
\begin{equation}
    f(\mathbf{x}^{(k)})-f(\mathbf{x}^\star) \leq \frac{2LD^2}{k+2}.
\end{equation}
\end{theorem}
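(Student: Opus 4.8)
The plan is to run the classical Frank-Wolfe analysis: derive a one-step decrease inequality from the Lipschitz-gradient (descent) bound, turn it into a scalar recursion for the optimality gap by using the linear-minimization property of $\mathbf{v}^{(k)}$ together with convexity of $f$, and then solve that recursion by induction with the prescribed step size $\gamma=2/(k+2)$.

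First I would invoke the quadratic upper bound implied by an $L$-Lipschitz gradient: for all $\mathbf{x},\mathbf{y}\in\mathcal{D}$, $f(\mathbf{y})\leq f(\mathbf{x})+\langle\nabla f(\mathbf{x}),\mathbf{y}-\mathbf{x}\rangle+\tfrac{L}{2}\Vert\mathbf{y}-\mathbf{x}\Vert^2$. Applying it with $\mathbf{x}=\mathbf{x}^{(k)}$ and $\mathbf{y}=\mathbf{x}^{(k)}+\gamma(\mathbf{v}^{(k)}-\mathbf{x}^{(k)})$, which lies in $\mathcal{D}$ since $\mathcal{D}$ is convex and $\gamma\in[0,1]$, and using that the update in Algorithm~\ref{alg:fw} guarantees $f(\mathbf{x}^{(k+1)})\leq f(\mathbf{x}^{(k)}+\gamma(\mathbf{v}^{(k)}-\mathbf{x}^{(k)}))$, I obtain
\begin{equation}
f(\mathbf{x}^{(k+1)})\leq f(\mathbf{x}^{(k)})+\gamma\langle\nabla f(\mathbf{x}^{(k)}),\mathbf{v}^{(k)}-\mathbf{x}^{(k)}\rangle+\tfrac{L\gamma^2}{2}\Vert\mathbf{v}^{(k)}-\mathbf{x}^{(k)}\Vert^2,
\end{equation}
and the last term is at most $\tfrac{L\gamma^2 D^2}{2}$ by definition of the diameter $D$.

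Next I would bound the linear term. Since $\mathbf{v}^{(k)}$ minimizes $\langle\mathbf{v},\nabla f(\mathbf{x}^{(k)})\rangle$ over $\mathcal{D}$ and $\mathbf{x}^\star\in\mathcal{D}$, we have $\langle\nabla f(\mathbf{x}^{(k)}),\mathbf{v}^{(k)}-\mathbf{x}^{(k)}\rangle\leq\langle\nabla f(\mathbf{x}^{(k)}),\mathbf{x}^\star-\mathbf{x}^{(k)}\rangle$, and convexity of $f$ gives $\langle\nabla f(\mathbf{x}^{(k)}),\mathbf{x}^\star-\mathbf{x}^{(k)}\rangle\leq f(\mathbf{x}^\star)-f(\mathbf{x}^{(k)})$. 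Writing $h_k:=f(\mathbf{x}^{(k)})-f(\mathbf{x}^\star)\geq 0$, the two displays combine to the recursion $h_{k+1}\leq(1-\gamma)h_k+\tfrac{L\gamma^2 D^2}{2}$.

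Finally I would close by induction on $k$ with $\gamma=2/(k+2)$. For the base case, the first step (step size $\gamma=1$) gives $h_1\leq\tfrac{LD^2}{2}\leq\tfrac{2LD^2}{3}$; for the inductive step, substituting $h_k\leq\tfrac{2LD^2}{k+2}$ and $\gamma=\tfrac{2}{k+2}$ into the recursion yields $h_{k+1}\leq 2LD^2\,\tfrac{k+1}{(k+2)^2}\leq\tfrac{2LD^2}{k+3}$, where the last step is the elementary inequality $(k+1)(k+3)\leq(k+2)^2$. I do not expect a genuine obstacle here: this is the textbook Frank-Wolfe argument. The points that need a little care are checking that the trial point $\mathbf{x}^{(k)}+\gamma(\mathbf{v}^{(k)}-\mathbf{x}^{(k)})$ stays feasible so that both the descent bound and the line-search-type update rule apply, keeping the index bookkeeping consistent with Algorithm~\ref{alg:fw} so that the base case is correctly placed, and the elementary inequality that drives the induction.
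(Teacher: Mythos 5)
Your proof is correct and is essentially the argument the paper relies on: the paper itself only cites \cite{mu2016scalable} for this theorem, but the same descent-lemma/linear-oracle/convexity recursion followed by induction with $\gamma=2/(k+2)$ is exactly what the paper reproduces for its multilevel variant in Theorem \ref{th:mlfw-convergence}. The only nit is the index bookkeeping at $k=0$ (the bound really starts from $k=1$ after the first step with $\gamma=1$), a sloppiness the paper shares, and which you already flag.
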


\begin{proof}
The proof can be found, for example, in \cite{mu2016scalable}.
\end{proof}

\subsection{Frank-Wolfe Thresholding Method for CPCP}
In this section we discuss the application of the Frank-Wolfe method to problem (\ref{eq:cpcp}). Since the FW algorithm can be applied only for smooth and constrained convex optimisation problems with a bounded feasible set, we reformulate (\ref{eq:cpcp}) into a such problem. In \cite{mu2016scalable} the reformulation was done by first performing an epigraph reformulation on (\ref{eq:cpcp}) obtaining,
\begin{equation} \label{eq:cpcp-epi}
\begin{array}{ll}
    \min & f(\mathbf{L},\mathbf{S}, t_L, t_S) := \frac{1}{2}\Vert\mathcal{P}_{\mathcal{Q}}[\mathbf{L}+\mathbf{S}-\mathbf{D}]\Vert_F^2 + \lambda_L t_L + \lambda_S t_S \\
    s.t. & \Vert\mathbf{L}\Vert_* \leq t_L, \Vert\mathbf{S}\Vert_1 \leq t_S.
\end{array}
\end{equation}
Now the objective function has a $2$-Lipschitz gradient $\nabla f$ with partial derivatives given as follows:
\begin{equation} \label{eq:cpcp-grad-LS}
    \nabla_L f(\mathbf{L},\mathbf{S},t_L,t_S) = \nabla_S f(\mathbf{L},\mathbf{S},t_L,t_S) = \mathcal{P}_{\mathcal{Q}} [\mathbf{L}+\mathbf{S}-\mathbf{D}],
\end{equation}
\begin{equation} \label{eq:cpcp-grad-t}
    \nabla_{t_L} f(\mathbf{L},\mathbf{S},t_L,t_S) = \lambda_L, \nabla_{t_S} f(\mathbf{L},\mathbf{S},t_L,t_S) = \lambda_S.
\end{equation}

Then to make the feasible region bounded we introduce upper bounds $U_L$ and $U_S$ for $t_L$ and $t_S$, respectively. It can be shown \cite{mu2016scalable} that we can choose
\begin{equation}
\begin{array}{cc}
    U_L=\frac{1}{2\lambda_L}\Vert\mathcal{P}_{\mathcal{Q}}[\mathbf{D}]\Vert_F^2, & U_S=\frac{1}{2\lambda_S}\Vert\mathcal{P}_{\mathcal{Q}}[\mathbf{D}]\Vert_F^2,
\end{array}
\end{equation}
and the resulting feasible set has a bounded diameter: $D\leq \sqrt{5}\cdot\sqrt{U_L^2+U_S^2}$. 

Now we can apply the FW algorithm on
\begin{equation} \label{eq:cpcp-fw}
\begin{array}{ll}
    \min & f(\mathbf{L},\mathbf{S}, t_L, t_S) := \frac{1}{2}\Vert\mathcal{P}_{\mathcal{Q}}[\mathbf{L}+\mathbf{S}-\mathbf{D}]\Vert_F^2 + \lambda_L t_L + \lambda_S t_S \\
    s.t. & \Vert\mathbf{L}\Vert_* \leq t_L \leq U_L, \Vert\mathbf{S}\Vert_1 \leq t_S \leq U_S.
\end{array}
\end{equation}
Setting $\mathbf{x}=(\mathbf{L},\mathbf{S},\lambda_L,\lambda_S)$ and using the gradient expressions (\ref{eq:cpcp-grad-LS})-(\ref{eq:cpcp-grad-t}) we can derive the linear optimisation oracle in step $2$ of Algorithm \ref{alg:fw} as two independent linear optimisation problems:   

\begin{equation} \label{eq:fw-L-subproblem}
(\mathbf{V}_L^{(k)}, V_{t_L}^{(k)})
 \in \argmin\limits_{\Vert\mathbf{V}_L\Vert_*\leq V_{t_L}\leq U_L}\langle \mathcal{P}_\mathcal{Q} [\mathbf{L}^{(k)}+\mathbf{S}^{(k)}-\mathbf{D}], \mathbf{V}_L\rangle + \lambda_L V_{t_L},
\end{equation}

\begin{equation} \label{eq:fw-S-subproblem}
(\mathbf{V}_S^{(k)}, V_{t_S}^{(k)})
 \in \argmin\limits_{\Vert\mathbf{V}_S\Vert_*\leq V_{t_S}\leq U_S}\langle \mathcal{P}_\mathcal{Q} [\mathbf{L}^{(k)}+\mathbf{S}^{(k)}-\mathbf{D}], \mathbf{V}_S\rangle + \lambda_S V_{t_S}.
\end{equation}

Both (\ref{eq:fw-L-subproblem}) and (\ref{eq:fw-S-subproblem}) are separable and can be solved in closed form using the leading singular values and the largest in magnitude elements of $\mathcal{P}_\mathcal{Q} [\mathbf{L}^{(k)}+\mathbf{S}^{(k)}-\mathbf{D}]$ \cite{mu2016scalable}. This is given in steps $3-16$ in Algorithm \ref{alg:fw-t}. Finally, we use the update rule (\ref{eq:fw-update}) for problem (\ref{eq:cpcp-fw}) resulting to step $17$ of Algorithm \ref{alg:fw-t}.

A major drawback of the FW method is that for the $\ell_1$-norm, at each iteration it projects a linear function on a $\ell_1$ ball, thus updating only one entry of a very large matrix. To resolve this problem \cite{mu2016scalable} suggested to add a thresholding operation to the FW algorithm, calling it Frank-Wolfe Thresholding (FW-T). This is performed in steps $18-19$ of Algorithm \ref{alg:fw-t}. Finally, as suggested in \cite{mu2016scalable}, in steps $20-21$ we update the bounds $U_L$ and $U_S$ tightening the feasibility set.

We present the complete Frank-Wolfe Thresholding method in Algorithm \ref{alg:fw-t}. Both primal and dual convergence of the FW-T algorithm was established in \cite{mu2016scalable}. Since the thresholding (and more generally proximal) operator decreases the function value more than the Frank-Wolfe update, the Frank-Wolfe Thresholding algorithm can be seen as a special case of Algorithm \ref{alg:fw} \cite{mu2016scalable}. Therefore FW-T converges to the solution $(\mathbf{L}^\star, \mathbf{S}^\star)$ in function value with the same $\mathcal{O}(1/k)$ rate. Although the FW-T method requires only computing the largest singular value of a $m\times n$ matrix at each iteration, SVD computations still remain the computational bottleneck.

\begin{algorithm}
  \caption{Frank-Wolfe Thresholing (FW-T) for (\ref{eq:cpcp})} \label{alg:fw-t}
  \begin{algorithmic}[1]
    \Require{$\mathbf{D}\in\mathbb{R}^{m\times n}; \lambda_L, \lambda_S > 0$}
    \State Set $\mathbf{L}^{(0)}=\mathbf{S}^{(0)}=\mathbf{0}; t_L^{(0)}=t_S^{(0)}=0; U_L^{(0)}=f(\mathbf{L}^{(0)},\mathbf{S}^{(0)},t_L^{(0)},t_S^{(0)})/\lambda_L; U_S^{(0)}=f(\mathbf{L}^{(0)},\mathbf{S}^{(0)},t_L^{(0)},t_S^{(0)})/\lambda_S$.
    \For{$k \gets 1 \textrm{ to } ...$}
    \State $\mathbf{M}_L^{(k)}\in\argmin\limits_{\Vert\mathbf{M}_L\Vert_*\leq 1} \langle \mathcal{P}_{\mathcal{Q}}[\mathbf{L}^{(k)}+\mathbf{S}^{(k)}-\mathbf{D}],\mathbf{M}_L\rangle$; \\
      \State $\mathbf{M}_S^{(k)}\in\argmin\limits_{\Vert\mathbf{M}_S\Vert_1\leq 1} \langle \mathcal{P}_{\mathcal{Q}}[\mathbf{L}^{(k)}+\mathbf{S}^{(k)}-\mathbf{D}],\mathbf{M}_S\rangle$; \\
      \If {$\lambda_L \geq -\langle\mathcal{P}_{\mathcal{Q}}[\mathbf{L}^{(k)}+\mathbf{S}^{(k)}-\mathbf{D}], \mathbf{M}_L^{(k)}\rangle$}
        \State $\mathbf{V}_L^{(k)} = \mathbf{0}$; $V_{t_L}^{(k)}=0$
      \Else
        \State $\mathbf{V}_L^{(k)} = U_L\mathbf{M}_L^{(k)}$; $V_{t_L}^{(k)}=U_L$
      \EndIf
      \If {$\lambda_S \geq -\langle\mathcal{P}_{\mathcal{Q}}[\mathbf{L}^{(k)}+\mathbf{S}^{(k)}-\mathbf{D}], \mathbf{M}_S^{(k)}\rangle$}
        \State $\mathbf{V}_S^{(k)} = \mathbf{0}$; $V_{t_S}^{(k)}=0$
      \Else
        \State $\mathbf{V}_S^{(k)} = U_S\mathbf{M}_S^{(k)}$; $V_{t_S}^{(k)}=U_S$
      \EndIf
      \State Compute $(\mathbf{L}^{(k+1)},\mathbf{S}^{(k+\frac{1}{2})},t_L^{(k+1)},t_S^{(k+\frac{1}{2})})$ as a minimiser of
      \begin{equation}
          \begin{array}{ll}
               \min\limits_{\mathbf{L},\mathbf{S},t_L,t_S} & \frac{1}{2}\Vert\mathcal{P}_{\mathcal{Q}}[\mathbf{L}+\mathbf{S}-\mathbf{D}]\Vert_F^2+\lambda_L t_L + \lambda_S t_S \\
               s.t. & \begin{pmatrix}\mathbf{L}\\ t_L\end{pmatrix}\in\conv\begin{Bmatrix}\begin{pmatrix}\mathbf{L}^{(k)}\\t_L^{(k)}\end{pmatrix},\begin{pmatrix}\mathbf{V}_L^{(k)}\\V_{t_L}^{(k)}\end{pmatrix}\end{Bmatrix}, \\
               & \begin{pmatrix}\mathbf{S}\\ t_S\end{pmatrix}\in\conv\begin{Bmatrix}\begin{pmatrix}\mathbf{S}^{(k)}\\t_S^{(k)}\end{pmatrix},\begin{pmatrix}\mathbf{V}_S^{(k)}\\V_{t_S}^{(k)}\end{pmatrix}\end{Bmatrix};
          \end{array}
      \end{equation}
      \Let{$\mathbf{S}^{(k+1)}$}{$\mathcal{S}_{\lambda_S}[\mathbf{S}^{(k+\frac{1}{2})}-\mathcal{P}_{\mathcal{Q}}[\mathbf{L}^{(k+\frac{1}{2})}+\mathbf{S}^{(k+\frac{1}{2})}-\mathbf{D}]]$}
      \Let{$t_S^{(k+1)}$}{$\Vert\mathbf{S}^{(k+1)}\Vert_1$}
      \Let{$U_L^{(k+1)}$}{$g(\mathbf{L}^{(K+1)},\mathbf{S}^{(k+1)},t_L^{(k+1)}),t_S^{(k+1)}/\lambda_L$}
      \Let{$U_S^{(k+1)}$}{$g(\mathbf{L}^{(K+1)},\mathbf{S}^{(k+1)},t_L^{(k+1)}),t_S^{(k+1)}/\lambda_S$}
    \EndFor \\
    \Return{$(\mathbf{L}^{(k+1)}, \mathbf{S}^{(k+1)})$}
  \end{algorithmic}
\end{algorithm}

\section{Multilevel Algorithms}

We propose multilevel variants of the two classical methods discussed above. IALM and FW-T have the same computational bottleneck of computing one or more singular values at each iteration. Our methods will build lower dimensional, so-called \textit{coarse}, models for each problem and use their singular values for iteration updates. We will show that both algorithms converge to an (approximate) solution of the original problem with the same worst case iteration complexity of their standard counterparts. However, the per iteration cost of multilevel methods is much smaller, since SVDs are performed on much smaller coarse models \footnote{In our experiments we observed 2-10 times cheaper per iteration complexities}.

\subsection{The Coarse Model} \label{sec:coarse}

In this section we will introduce a generic lower dimensional model for (\ref{eq:cpcp}) and (\ref{eq:rpca}). It uses the so-called restriction operator $\mathbf{R}\in\mathbb{R}^{n\times n_H}$, for some $n_H\leq n$, where $n_H$ is the dimension of the coarse model. Throughout the paper we will make the following assumption about the restriction operator $\mathbf{R}$.

\begin{assumption} \label{ass:R}
The restriction operator $\mathbf{R}$ has linearly independent columns. Therefore, $\mathbf{R}$ has a left inverse $\mathbf{R}^\dagger\in\mathbb{R}^{n_H\times n}$ so that $\mathbf{R}^\dagger\mathbf{R}=\mathbf{I}_{n_H}$ (in general, $\mathbf{R}\mathbf{R}^\dagger\neq\mathbf{I}_n$ and $\mathbf{R}$ may not have a right inverse).
\end{assumption}

Assumption \ref{ass:R} is a very generic and natural assumption about the restriction operator and it is satisfied for all restriction operators used in this work. Indeed, there is no practical advantage of having redundant columns in $\mathbf{R}$, and we can always remove the redundant columns thus creating lower dimensional coarse models.  We make two generic assumptions about the coarse model.
\begin{assumption} \label{ass:nH}
\begin{equation} \label{eq:nH}
\rank(\mathbf{L}^\star)\leq n_H \leq \frac{m+1}{2}.
\end{equation}
\end{assumption}
The first inequality of (\ref{eq:nH}) holds whenever $n<m$, which is the case in all practical problems we consider in the paper. The second inequality, on the other hand, has to be explicitly enforced using an approximate guess on the rank of $\mathbf{L}^\star$, which is well known for most applications. For example, for the video background extraction problem $\rank(\mathbf{L}^\star)\approx 1$ and for the facial shadow removal problem it is $\approx 9$. In all experiments we use this prior information to set the number of levels so that (\ref{eq:nH}) is satisfied.

\begin{assumption} \label{ass:LH}
The low-rank component $\mathbf{L}^\star$ can be represented as $\mathbf{L}^\star=\mathbf{L}_H^\star\mathbf{R}^\top$ for some coarse $\mathbf{L}_H^\star\in\mathbb{R}^{m\times n_H}$.
\end{assumption}

Assumption \ref{ass:LH} is not restrictive for the problems considered here. We demonstrate this fact by using a simple example. Let $\mathbf{L}^\star$ be a simple white rank one background extracted from a video. This means that each of its columns is a white frame stacked as a column vector. Thus it can be written as a matrix of ones. Also, as it is a standard practice in multigrid literature \cite{briggs2000multigrid}, assume $\mathbf{R}$ is the normalised interpolation operator (\ref{eq:Rx}):

\begin{equation} \label{eq:Rx}
\mathbf{R}_n = \frac{1}{2}\begin{bmatrix}
2 & 0 & 0 & 0 &...& 0 & 0 \\
2 & 0 & 0 & 0 &...& 0 & 0 \\
1 & 1 & 0 & 0 &...& 0 & 0 \\
0 & 2 & 0 & 0 &...& 0 & 0 \\
0 & 1 & 1 & 0 &...& 0 & 0 \\
  &   &   &	  &...&   &   \\
0 & 0 & 0 & 0 &...& 1 & 2 \end{bmatrix}{\in\mathbb{R}^{n\times \frac{n}{2}}}.
\end{equation}

Then setting $\mathbf{L_H}^\star$ as a matrix of ones we get:

\begin{equation}
\mathbf{L}_H^\star \mathbf{R}^\top = \begin{bmatrix}
1 & 1 & 1 \\
 & \cdots &  \\
1 & 1 & 1 
\end{bmatrix} \cdot \frac{1}{2} \begin{bmatrix}
2 & 2 & 1 & 0 & 0 & 0 \\
0 & 0 & 1 & 2 & 1 & 0 \\
0 & 0 & 0 & 0 & 1 & 2 
\end{bmatrix} = \begin{bmatrix}
1 & 1 & 1 & 1 & 1 & 1 \\
  & & & \cdots & & \\
1 & 1 & 1 & 1 & 1 & 1
\end{bmatrix} = \mathbf{L}^\star.
\end{equation}

As can be seen from this example, the interpolation restriction operator is suitable for rank one matrices. Indeed, consider the case where $\mathbf{L}_H^\star$ is constructed by repeating a column vector next to each other (this will obviously give a rank one matrix). In fact, it is easy to notice that multiplying a matrix $\mathbf{L}_H^\star$ by this $\mathbf{R}^\top$ from right, effectively adds linear combinations of columns of $\mathbf{L}_H^\star$ without altering the existing columns. And since $\mathbf{R}$ is normalised, the resulted higher dimensional matrix will have exactly the same columns as $\mathbf{L}^\star$. Having this in mind, given a rank one $\mathbf{L}^\star$ with normalised columns, we can always construct a $\mathbf{L}_H^\star$ simply by removing a certain number of its columns.

Assumption \ref{ass:LH} is one of the key assumptions of this paper. It essentially says that the solution can be represented with varying degrees of fidelity, which is what we observe in many applications, including those studied here. It is important to notice, that while we assume the existence of such $\mathbf{L}_H^\star$, we do \textit{not} need to know its value. Nowhere in our multilevel algorithms we use the value of $\mathbf{L}_H^\star$.

In the multilevel literature the standard choice for restriction operator is the interpolation operator (\ref{eq:Rx}) and we will also use it in our experiments. Often in practice we use more than $2$ levels of coarse models. Specifically, we use a restriction operator $\mathbf{R}=\mathbf{R}_n \cdot\mathbf{R}_{\frac{n}{2}}\cdot \ldots \cdot\mathbf{R}_{n_H}\in\mathbb{R}^{n\times n_H}$, where $\mathbf{R}_k\in\mathbb{R}^{k\times\frac{k}{2}}$ is the interpolation operator of appropriate dimensions. For all experiments we use up to the deepest possible levels, so that $n_H > \max \{\rank(\mathbf{L}^\star), r \}$, where $r$ is the number of singular values required by the overlying algorithm. Clearly, this $\mathbf{R}$ has linearly independent columns and thus is full rank.

\subsection{Multilevel IALM} \label{sec:ml-ialm}
In this section we present a computationally efficient multilevel variant of the Inexact ALM algorithm. First, we make an additional assumption for this subsection:

\begin{assumption} \label{ass:Rialm}
$\mathbf{R}$ is normalised so that
$\Vert\mathbf{R}\Vert_2 \leq \Vert\mathbf{R}\Vert_\star \leq 1$.
\end{assumption}

Before we proceed, we shall present an important inequality about singular values that we will use in the proofs. The proof can be found for instance in \cite{wang1997some}.

\begin{theorem}
Let $\mathbf{A},\mathbf{B}\in\mathbb{R}^{m\times n}$ be matrices and $m\geq n$. Then for any $k\in \{1,\dots ,n\}$
\begin{equation} \label{eq:sing-ineq}
\sum_{i=1}^k \sigma_i (\mathbf{A})\sigma_{n-i+1}(\mathbf{B}) \leq \sum_{i=1}^k \sigma_i (\mathbf{A}\mathbf{B}) \leq \sum_{i=1}^k \sigma_i (\mathbf{A})\sigma_i (\mathbf{B}).
\end{equation}
\end{theorem}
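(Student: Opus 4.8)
The plan is to derive both inequalities directly from the variational description of the Ky Fan $k$-norm $\Vert\mathbf{M}\Vert_{(k)}:=\sum_{i=1}^{k}\sigma_i(\mathbf{M})$, rather than going through compound matrices and log-majorisation. I would rely on four standard facts: (a) $\Vert\cdot\Vert_{(k)}$ is a norm, hence subadditive and positively homogeneous; (b) for any $\mathbf{U}_0,\mathbf{V}_0$ with $k$ orthonormal columns, $\Vert\mathbf{M}\Vert_{(k)}\geq\langle\mathbf{M},\mathbf{U}_0\mathbf{V}_0^{\top}\rangle$, with equality attained by the top-$k$ singular vectors of $\mathbf{M}$; (c) submultiplicativity, $\sigma_i(\mathbf{X}\mathbf{Y})\leq\sigma_i(\mathbf{X})\Vert\mathbf{Y}\Vert_2$, together with $\rank(\mathbf{X}\mathbf{Y})\leq\rank(\mathbf{Y})$; and (d) the Abel (summation-by-parts) identity that writes a matrix from its SVD as a nonnegative combination of nested coordinate projections (or partial isometries) weighted by consecutive singular-value gaps. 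Since all three singular-value vectors in the statement are invariant under left/right orthogonal transforms, I would assume $\mathbf{B}=\mathbf{\Sigma}_B=\mathrm{diag}(\beta_1\geq\cdots\geq\beta_n\geq0)$, $\beta_i=\sigma_i(\mathbf{B})$; here $\mathbf{A}\mathbf{B}$ is read so that $\mathbf{A},\mathbf{B},\mathbf{A}\mathbf{B}$ all carry $n$ singular values (e.g.\ $\mathbf{A}\in\mathbb{R}^{m\times n}$, $\mathbf{B}$ square; the general rectangular case follows by zero-padding to make the inner dimension $n$), the hypothesis $m\geq n$ only keeping the indexing uniform.

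For the right-hand inequality I would set $\beta_{n+1}:=0$ and use the Abel identity on $\mathbf{B}$: $\mathbf{A}\mathbf{B}=\sum_{j=1}^{n}(\beta_j-\beta_{j+1})\,\mathbf{A}\mathbf{E}_j$, where $\mathbf{E}_j=\mathrm{diag}(1,\dots,1,0,\dots,0)$ has $j$ ones and every coefficient $\beta_j-\beta_{j+1}\geq0$. Subadditivity gives $\Vert\mathbf{A}\mathbf{B}\Vert_{(k)}\leq\sum_{j}(\beta_j-\beta_{j+1})\Vert\mathbf{A}\mathbf{E}_j\Vert_{(k)}$; by (c), $\mathbf{A}\mathbf{E}_j$ has rank $\leq j$ and $\sigma_i(\mathbf{A}\mathbf{E}_j)\leq\sigma_i(\mathbf{A})$, so $\Vert\mathbf{A}\mathbf{E}_j\Vert_{(k)}\leq\sum_{i=1}^{\min(k,j)}\sigma_i(\mathbf{A})$. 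Substituting and exchanging the two summations, the coefficient of $\sigma_i(\mathbf{A})$ becomes $\sum_{j\geq i}(\beta_j-\beta_{j+1})=\beta_i$, which is exactly $\sum_{i=1}^{k}\sigma_i(\mathbf{A})\sigma_i(\mathbf{B})$.

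For the left-hand inequality I would again take $\mathbf{B}=\mathbf{\Sigma}_B$, write the SVD $\mathbf{A}=\sum_{l=1}^{n}\alpha_l\,\mathbf{p}_l\mathbf{q}_l^{\top}$ with $\alpha_l=\sigma_l(\mathbf{A})$, $\alpha_{n+1}:=0$, and apply the Abel identity to $\mathbf{A}$: $\mathbf{A}=\sum_{t=1}^{n}(\alpha_t-\alpha_{t+1})\mathbf{A}_t$ with $\mathbf{A}_t:=\sum_{l\leq t}\mathbf{p}_l\mathbf{q}_l^{\top}$. I would then apply (b) with the single test pair $\mathbf{U}_0=[\mathbf{p}_1\,|\,\cdots\,|\,\mathbf{p}_k]$, $\mathbf{V}_0=[\mathbf{q}_1\,|\,\cdots\,|\,\mathbf{q}_k]$ (orthonormal columns). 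Using $\mathbf{p}_l^{\top}\mathbf{U}_0=\mathbf{e}_l^{\top}$ for $l\leq k$ and $=\mathbf{0}$ otherwise, a short trace computation gives $\langle\mathbf{A}_t\mathbf{B},\mathbf{U}_0\mathbf{V}_0^{\top}\rangle=\sum_{l=1}^{\min(t,k)}\mathbf{q}_l^{\top}\mathbf{\Sigma}_B\,\mathbf{q}_l=\sum_{s=1}^{n}\beta_s\,w_s^{(t)}$ with $w_s^{(t)}:=\sum_{l\leq\min(t,k)}q_{l,s}^{2}$. Orthonormality of the rows and columns of $\mathbf{Q}=[\mathbf{q}_1\,|\,\cdots\,|\,\mathbf{q}_n]$ forces $0\leq w_s^{(t)}\leq1$ and $\sum_{s}w_s^{(t)}=\min(t,k)$, so minimising the linear functional $w\mapsto\sum_s\beta_s w_s$ over this box places all the mass on the smallest $\beta$'s and yields $\langle\mathbf{A}_t\mathbf{B},\mathbf{U}_0\mathbf{V}_0^{\top}\rangle\geq\sum_{i=1}^{\min(t,k)}\beta_{n-i+1}$. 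Since the coefficients $\alpha_t-\alpha_{t+1}$ are nonnegative, $\Vert\mathbf{A}\mathbf{B}\Vert_{(k)}\geq\langle\mathbf{A}\mathbf{B},\mathbf{U}_0\mathbf{V}_0^{\top}\rangle=\sum_{t}(\alpha_t-\alpha_{t+1})\langle\mathbf{A}_t\mathbf{B},\mathbf{U}_0\mathbf{V}_0^{\top}\rangle\geq\sum_{t}(\alpha_t-\alpha_{t+1})\sum_{i=1}^{\min(t,k)}\beta_{n-i+1}$, and exchanging the sums the coefficient of $\beta_{n-i+1}$ is $\sum_{t\geq i}(\alpha_t-\alpha_{t+1})=\alpha_i$, i.e.\ exactly $\sum_{i=1}^{k}\sigma_i(\mathbf{A})\sigma_{n-i+1}(\mathbf{B})$.

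The hard part is the left-hand inequality; the right-hand side is essentially bookkeeping. The sequence $i\mapsto\sigma_i(\mathbf{A})\sigma_{n-i+1}(\mathbf{B})$ is in general not monotone, so the quick route that handles the right-hand side---Horn's multiplicative inequality plus ``weak log-majorisation implies weak majorisation''---does not transfer, and the classical fix (the Gelfand--Naimark product inequalities over arbitrary index sets, plus a continuity argument for rank-deficient $\mathbf{B}$) is awkward. In the direct argument above the crux is twofold: choosing one test pair $(\mathbf{U}_0,\mathbf{V}_0)$ that is simultaneously useful for every level matrix $\mathbf{A}_t\mathbf{B}$, and recognising that after diagonalising $\mathbf{B}$ those compressions reduce to box-constrained weighted sums of the $\beta_s$ whose minima land precisely on the reversed (``anti-sorted'') indices $n-i+1$. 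Diagonalising $\mathbf{B}$ is essential rather than merely convenient: for a non-diagonal $\mathbf{B}$ the same compression $\mathbf{q}_l^{\top}\mathbf{B}^{\top}\mathbf{q}_l$ is governed by eigenvalues of the symmetric part of $\mathbf{B}$, which can be negative and wreck the bound. A final minor point is that, since nothing in this argument inverts $\mathbf{A}$ or $\mathbf{B}$, zero singular values require no special treatment.
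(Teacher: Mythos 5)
The paper does not actually prove this theorem: it is quoted from the literature with the remark that ``the proof can be found for instance in \cite{wang1997some}'', so there is no in-paper argument to compare against. Your proof is, as far as I can check, correct and self-contained, and it is a genuinely different (and more elementary) route than the compound-matrix/log-majorisation machinery behind the classical Gelfand--Naimark/Wang--Xi product inequalities. The right-hand bound via Abel summation of $\mathbf{\Sigma}_B$ into nonnegative multiples of the truncation projections $\mathbf{E}_j$, combined with $\operatorname{rank}(\mathbf{A}\mathbf{E}_j)\leq j$ and $\sigma_i(\mathbf{A}\mathbf{E}_j)\leq\sigma_i(\mathbf{A})$, checks out, and the exchange of sums does reproduce $\sum_{i\leq k}\sigma_i(\mathbf{A})\sigma_i(\mathbf{B})$. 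The left-hand bound is the delicate part, and your two key moves both hold up: the single test pair $(\mathbf{U}_0,\mathbf{V}_0)$ built from the top-$k$ singular vectors of $\mathbf{A}$ works simultaneously for every level matrix $\mathbf{A}_t\mathbf{B}$ because $\mathbf{U}_0^\top\mathbf{A}_t$ collapses to $\sum_{l\leq\min(t,k)}\mathbf{e}_l\mathbf{q}_l^\top$, and the resulting weights $w_s^{(t)}$ genuinely satisfy the box and sum constraints (rows and columns of the orthogonal $\mathbf{Q}$ are unit vectors), so the bathtub minimisation correctly lands on the $\min(t,k)$ smallest $\beta_s$. You are also right to flag, and repair, the one defect that actually lives in the statement rather than in any proof: with $\mathbf{A},\mathbf{B}\in\mathbb{R}^{m\times n}$ the product $\mathbf{A}\mathbf{B}$ is ill-typed unless $m=n$, and in the paper's own application ($\mathbf{L}_H\mathbf{R}^\top$ with $\mathbf{L}_H\in\mathbb{R}^{m\times n_H}$, $\mathbf{R}^\top\in\mathbb{R}^{n_H\times n}$) the index $n$ in $\sigma_{n-i+1}(\mathbf{B})$ must be read as the inner dimension; your normalisation (diagonalise $\mathbf{B}$, pad to square) is the right fix. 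What your approach buys is a proof using only the Ky Fan norm's subadditivity and its variational characterisation; what the cited route buys is generality (arbitrary index sets, not just the top-$k$ block), which is not needed here.
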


Now we show that the restriction operator approximately preserves the nuclear norm.

\begin{theorem} \label{th:LH-eps}
For any $\mathbf{L}_H\in\mathbb{R}^{m\times n_H}$ and $\mathbf{R}\in\mathbb{R}^{n\times n_H}$ with $n_H\leq n\leq m$, the following inequalities hold:
\begin{enumerate}
    \item \begin{equation}
    \Vert\mathbf{L}_H\mathbf{R}^\top\Vert_*\geq \Vert\mathbf{L}_H\Vert_* -\epsilon,
\end{equation}
with $\epsilon=\sum_{k=1}^{r_H}\sigma_k(\mathbf{L}_H)(1-\sigma_{n_H -k+1}(\mathbf{R}))$, where $r_H=\rank(\mathbf{L}_H)$; and
    \item if $\Vert\mathbf{R}\Vert_2\leq 1$, then also
\begin{equation}
    \Vert\mathbf{L}_H\Vert_* \geq \Vert\mathbf{L}_H\mathbf{R}^\top\Vert_*.
\end{equation}
\end{enumerate}
\end{theorem}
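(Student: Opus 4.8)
The plan is to deduce both inequalities directly from the singular value inequality (\ref{eq:sing-ineq}) applied to the product $\mathbf{A}\mathbf{B}$ with $\mathbf{A}=\mathbf{L}_H$ and $\mathbf{B}=\mathbf{R}^\top$. Note that $\mathbf{L}_H\in\mathbb{R}^{m\times n_H}$ and $\mathbf{R}^\top\in\mathbb{R}^{n_H\times n}$, so the product $\mathbf{L}_H\mathbf{R}^\top\in\mathbb{R}^{m\times n}$ has at most $n_H$ nonzero singular values; thus $\Vert\mathbf{L}_H\mathbf{R}^\top\Vert_*=\sum_{i=1}^{n_H}\sigma_i(\mathbf{L}_H\mathbf{R}^\top)$ and similarly $\Vert\mathbf{L}_H\Vert_*=\sum_{i=1}^{n_H}\sigma_i(\mathbf{L}_H)$. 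One subtlety is that (\ref{eq:sing-ineq}) as stated takes $\mathbf{A},\mathbf{B}$ both of size $m\times n$ with $m\ge n$; here I would instead invoke it in the form appropriate to the product $\mathbf{L}_H\mathbf{R}^\top$ (the cited result in \cite{wang1997some} holds for conformable products, taking the relevant dimension to be $n_H$), so that for $k=n_H$:
\begin{equation}
\sum_{i=1}^{n_H} \sigma_i(\mathbf{L}_H)\,\sigma_{n_H-i+1}(\mathbf{R}^\top) \;\leq\; \sum_{i=1}^{n_H}\sigma_i(\mathbf{L}_H\mathbf{R}^\top) \;\leq\; \sum_{i=1}^{n_H}\sigma_i(\mathbf{L}_H)\,\sigma_i(\mathbf{R}^\top).
\end{equation}

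For part 1, I would use the left-hand inequality. Since $\sigma_i(\mathbf{R}^\top)=\sigma_i(\mathbf{R})$, the left bound reads $\Vert\mathbf{L}_H\mathbf{R}^\top\Vert_*\ge\sum_{i=1}^{n_H}\sigma_i(\mathbf{L}_H)\sigma_{n_H-i+1}(\mathbf{R})$. Rewrite this as $\sum_{i=1}^{n_H}\sigma_i(\mathbf{L}_H) - \sum_{i=1}^{n_H}\sigma_i(\mathbf{L}_H)\bigl(1-\sigma_{n_H-i+1}(\mathbf{R})\bigr)$. The first sum is exactly $\Vert\mathbf{L}_H\Vert_*$. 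In the second sum, the terms with $i>r_H=\rank(\mathbf{L}_H)$ vanish because $\sigma_i(\mathbf{L}_H)=0$ there, so it collapses to $\sum_{k=1}^{r_H}\sigma_k(\mathbf{L}_H)\bigl(1-\sigma_{n_H-k+1}(\mathbf{R})\bigr)=\epsilon$. This gives $\Vert\mathbf{L}_H\mathbf{R}^\top\Vert_*\ge\Vert\mathbf{L}_H\Vert_*-\epsilon$ as claimed. (One should note the sum defining $\epsilon$ uses the reindexing $k\leftrightarrow i$; it is harmless since only $k\le r_H$ survive.)

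For part 2, I would use the right-hand inequality: $\Vert\mathbf{L}_H\mathbf{R}^\top\Vert_*\le\sum_{i=1}^{n_H}\sigma_i(\mathbf{L}_H)\sigma_i(\mathbf{R})$. Under the hypothesis $\Vert\mathbf{R}\Vert_2\le1$, every singular value satisfies $\sigma_i(\mathbf{R})\le\sigma_1(\mathbf{R})=\Vert\mathbf{R}\Vert_2\le1$, hence each term is bounded by $\sigma_i(\mathbf{L}_H)$, and summing gives $\Vert\mathbf{L}_H\mathbf{R}^\top\Vert_*\le\sum_{i=1}^{n_H}\sigma_i(\mathbf{L}_H)=\Vert\mathbf{L}_H\Vert_*$. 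The only mild obstacle is making sure the singular value inequality (\ref{eq:sing-ineq}) is invoked in the correct conformable form for a rectangular product rather than the square case literally stated; once that is granted, both parts are short algebraic rearrangements, with the truncation of the sum at $r_H$ in part 1 being the one place requiring a word of justification.
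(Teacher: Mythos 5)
Your proof is correct, and Part 1 is essentially identical to the paper's argument: both apply the lower bound of (\ref{eq:sing-ineq}) to the product $\mathbf{L}_H\mathbf{R}^\top$ with $k=n_H$ and then rewrite the bound as $\Vert\mathbf{L}_H\Vert_*-\epsilon$, with the sum truncating at $r_H$ because the remaining singular values of $\mathbf{L}_H$ vanish. For Part 2, however, you take a genuinely different and cleaner route. The paper writes $\Vert\mathbf{L}_H\Vert_*=\Vert\mathbf{L}_H\mathbf{R}^\top{\mathbf{R}^\dagger}^\top\Vert_*$ using the left inverse from Assumption \ref{ass:R} and applies the \emph{lower}-bound inequality again to the product with ${\mathbf{R}^\dagger}^\top$; this forces it to relate $\sigma_{n_H-k+1}(\mathbf{R}^\dagger)$ to $\sigma_{n_H-k+1}^{-1}(\mathbf{R})$, an identification that only holds for the Moore--Penrose pseudoinverse and, even then, with the index order reversed (the $j$-th largest singular value of $\mathbf{R}^\dagger$ is the reciprocal of the $j$-th \emph{smallest} of $\mathbf{R}$). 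Your argument instead applies the \emph{upper} bound of (\ref{eq:sing-ineq}) directly to $\mathbf{L}_H\mathbf{R}^\top$ and uses $\sigma_i(\mathbf{R})\leq\Vert\mathbf{R}\Vert_2\leq 1$ termwise, which needs no left inverse, no pseudoinverse spectral identity, and no index bookkeeping. What your approach buys is a shorter and more robust proof of Part 2 under exactly the stated hypothesis $\Vert\mathbf{R}\Vert_2\leq 1$; you also rightly flag that (\ref{eq:sing-ineq}) must be invoked in its conformable rectangular form, a point the paper passes over silently.
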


\begin{proof}
Since $n\leq n_H$, then using (\ref{eq:sing-ineq}) we have
\begin{equation}
\begin{split}
    \Vert\mathbf{L}_H\mathbf{R}^\top\Vert_* & \geq \sum_{k=1}^{n_H} \sigma_k(\mathbf{L}_H\mathbf{R}^\top) \\
     & \geq \sum_{k=1}^{n_H} \sigma_k(\mathbf{L}_H)\sigma_{n_H-k+1}(\mathbf{R}^\top) \\
     & = \Vert\mathbf{L}_H\Vert_* -\sum_{k=1}^{r_H}\sigma_k(\mathbf{L}_H)(1-\sigma_{n_H -k+1}(\mathbf{R})).
\end{split}
\end{equation}

The second part can be shown similarly. From Assumption \ref{ass:R} we have 
\begin{equation}
\begin{split}
    \Vert\mathbf{L}_H\Vert_* & = \Vert\mathbf{L}_H\mathbf{R}^\top{\mathbf{R}^\dagger}^\top\Vert_* \\ 
     & \geq \sum_{k=1}^{n_H} \sigma_k(\mathbf{L}_H\mathbf{R}^\top{\mathbf{R}^\dagger}^\top) \\
     & \geq \sum_{k=1}^{n_H} \sigma_k(\mathbf{L}_H\mathbf{R}^\top)\sigma_{n_H-k+1}({\mathbf{R}^\dagger}^\top) \\
     & = \Vert\mathbf{L}_H\mathbf{R}^\top\Vert_* -\sum_{k=1}^{r_H}\sigma_k(\mathbf{L})(1-\sigma_{n_H -k+1}(\mathbf{R}^\dagger)) \\
     & = \Vert\mathbf{L}_H\mathbf{R}^\top\Vert_* -\sum_{k=1}^{r_H}\sigma_k(\mathbf{L})(1-\sigma_{n_H -k+1}^{-1}(\mathbf{R})).
\end{split}
\end{equation}

Finally, if $\sigma_k(\mathbf{R}) \leq 1$ for all $k=n_H-r_H+1,\dots,n_H$, then $\Vert\mathbf{L}_H\mathbf{R}^\top\Vert_* \leq \Vert\mathbf{L}_H\Vert_\star$.

\end{proof}

As it can been seen from Theorem \ref{th:LH-eps}, in general having $\epsilon=0$ requires an orthogonal $\mathbf{R}$, which may not be sparse, thus making each iteration of the algorithm computationally expensive. However, if $\mathbf{L}^\star$ is low rank with quickly decreasing singular values and $\sigma_i(\mathbf{R})$ drop slowly, then $\epsilon$ will be small. This is indeed the case for many computer vision applications such as video background extraction and facial shadow removal discussed in the paper, where $\mathbf{L}^\star$ is not only low rank, but also has quadratically decreasing singular values. Moreover, it is easy to check that the singular values of the interpolation restriction operator (\ref{eq:Rx}) satisfy $\sigma_1 (\mathbf{R}) / \sigma_{n_H} (\mathbf{R}) \leq 2$.

Now we proceed to define a coarse model for the Augmented Lagrangian function (\ref{eq:aug-lagr2}):
\begin{equation} \label{eq:aug-lagr2}
    \mathcal{L}(\mathbf{L},\mathbf{S},\mathbf{Y},\mu) = \Vert \mathbf{L}\Vert_* + \lambda\Vert \mathbf{S}\Vert_1 + \langle\mathbf{Y},\mathbf{D}-\mathbf{L}-\mathbf{S}\rangle + \frac{\mu}{2} \Vert \mathbf{D}-\mathbf{L}-\mathbf{S}\Vert_F^2,
\end{equation}

For any fixed $\mathbf{S}$, $\mathbf{Y}$ and $\mu$, we define the coarse augmented Lagrangian function of (\ref{eq:rpca}) as
\begin{equation} \label{eq:coarse-aug-lagr}
    \mathcal{L}_H(\mathbf{L}) = \Vert \mathbf{L}\Vert_* + \lambda\Vert\mathbf{S}\Vert_1 + \langle\mathbf{Y}, [(\mathbf{D}-\mathbf{S})\mathbf{R}-\mathbf{L}]\mathbf{R}^\top\rangle + \frac{\mu}{2} \Vert (\mathbf{D}-\mathbf{S})\mathbf{R}-\mathbf{L}\Vert_F^2.
\end{equation}
We will use the minimiser of $\mathcal{L}_H$ to approximately minimise the true Augmented Lagrangian function (\ref{eq:aug-lagr2}) over $\mathbf{L}$. The minimiser of $\mathcal{L}_H$ is given by the singular value thresholding operator as
\begin{equation}
    \mathbf{L}_H=\mathbf{U}_H\mathcal{S}_{\mu^{-1}}[\mathbf{\Sigma}_H]\mathbf{V}_H^\top,
\end{equation}
where $\mathbf{U}_H\mathbf{\Sigma}_H\mathbf{V}_H^\top=(\mathbf{D}-\mathbf{S}+\mu^{-1}\mathbf{Y})\mathbf{R} = \mathbf{M}\mathbf{R}$. Then we use the prolongation operator $\mathbf{R}^\top$ to lift the minimiser of (\ref{eq:coarse-aug-lagr}) to the fine dimension. The Multilevel IALM algorithm becomes as stated in Algorithm \ref{alg:ml-ialm}.

\begin{algorithm}
  \caption{Multilevel Inexact ALM (ML-IALM) \label{alg:ml-ialm}}
  \begin{algorithmic}[1]
    \Require{$\mathbf{D}, \mathbf{S}^{(0)}, \mathbf{Y}^{(0)} \in\mathbb{R}^{m\times n}$; $\mu_0>0$}
    \For{$k \gets 1 \textrm{ to } ...$}
      \State// Solve $\mathbf{L}_H^{(k+1)}=\argmin\limits_{\mathbf{L}} \mathcal{L}_H (\mathbf{L},\mathbf{S}^{(k)},\mathbf{Y}^{(k)},\mu_k)$
      \Let{$\mathbf{M}^{(k)}_H$}{$(\mathbf{D}-\mathbf{S}^{(k)} + \mu_k^{-1}\mathbf{Y}^{(k)})\mathbf{R}$}
      \Let{$(\mathbf{U}_H, \mathbf{\Sigma}_H, \mathbf{V}_H)$}{SVD($\mathbf{M}^{(k)}_H$)}
      \Let{$\mathbf{L}_H^{(k+1)}$}{$\mathbf{U}_H \mathcal{S}_{\mu^{-1}}[\mathbf{\Sigma}_H]\mathbf{V}_H^\top$}
      \Let{$\mathbf{L}^{(k+1)}$}{$\mathbf{L}_H^{(k+1)} \mathbf{R}^\top$}
      \State// Continue as in Algorithm \ref{alg:ialm}
      \Let{$\mathbf{S}^{(k+1)}$}{$\mathcal{S}_{\lambda\mu_k^{-1}}[\mathbf{D}-\mathbf{L}^{(k+1)}+\mu_k^{-1}\mathbf{Y}^{(k)}]$}
      \Let{$\mathbf{Y}^{(k+1)}$}{$\mathbf{Y}^{(k)} + \mu_k (\mathbf{D}-\mathbf{L}^{(k+1)}-\mathbf{S}^{(k+1)})$} 
      \State Update $\mu_k \leftarrow \mu_{k+1}$
    \EndFor\\
    \Return{$(\mathbf{L}^{(k+1)}, \mathbf{S}^{(k+1)})$}
  \end{algorithmic}
\end{algorithm}

Now we proceed to study the convergence of Algorithm \ref{alg:ml-ialm}. First, we define the sequence $\widehat{\mathbf{Y}}^{(k)}$, which we will use throughout this section. Let
\begin{equation} \label{eq:Yhat}
\widehat{\mathbf{Y}}^{(k+1)}=\mathbf{Y}^{(k)}+\mu_{k}[\mathbf{D}-\mathbf{S}^{(k)} -\mathbf{L}^{(k+1)}].
\end{equation}

We start the convergence proof of Algorithm \ref{alg:ml-ialm} with the multilevel versions of some lemmas of \cite{lin2010augmented}. We begin with Lemma 1 from \cite{lin2010augmented} and show that Multilevel IALM also produces bounded sequences. 

\begin{lemma} \label{l:bounded-sequences}
Let $\widehat{\mathbf{Y}}^{(k+1)}$ be defined as in (\ref{eq:Yhat}) and $\mathbf{Y}^{(k)}$ be defined as in Algorithm \ref{alg:ml-ialm}. Then  the sequence $\{\mathbf{Y}^{(k)}\}$ is bounded and the following hold:
\begin{enumerate}
    \item \begin{equation} \label{eq:LH-optimality}
\widehat{\mathbf{Y}}^{(k+1)}\mathbf{R} + \mu_k \mathbf{L}_H^{(k+1)}(\mathbf{R}^\top\mathbf{R}-\mathbf{I}) \in \partial\Vert\mathbf{L}_H^{(k+1)}\Vert_*,
\end{equation}
for any $\mathbf{L}_H^{(k+1)}\in\mathbb{R}^{m\times n_H}$.
    \item \begin{equation} \label{eq:L-optimality}
\mathbf{Y}^{(k+1)} \in \partial\Vert\mathbf{S}^{(k+1)}\Vert_*.
\end{equation}
\end{enumerate}
\end{lemma}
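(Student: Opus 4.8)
The plan is to mirror the proof of Lemma~1 in \cite{lin2010augmented}, replacing each exact first-order optimality condition of IALM by the corresponding condition for the coarse subproblems of Algorithm~\ref{alg:ml-ialm}. I would start from the sparse block, since it is what delivers boundedness: by construction $\mathbf{S}^{(k+1)}$ exactly minimises $\mathbf{S}\mapsto\mathcal{L}(\mathbf{L}^{(k+1)},\mathbf{S},\mathbf{Y}^{(k)},\mu_k)$ from (\ref{eq:aug-lagr2}), so $0\in\lambda\,\partial\Vert\mathbf{S}^{(k+1)}\Vert_1-\mathbf{Y}^{(k)}-\mu_k(\mathbf{D}-\mathbf{L}^{(k+1)}-\mathbf{S}^{(k+1)})$, and substituting the multiplier update $\mathbf{Y}^{(k+1)}=\mathbf{Y}^{(k)}+\mu_k(\mathbf{D}-\mathbf{L}^{(k+1)}-\mathbf{S}^{(k+1)})$ yields $\mathbf{Y}^{(k+1)}\in\lambda\,\partial\Vert\mathbf{S}^{(k+1)}\Vert_1$, which is the second claim. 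Boundedness of $\{\mathbf{Y}^{(k)}\}$ is then immediate: every element of $\partial\Vert\cdot\Vert_1$ has all its entries in $[-1,1]$, so the entries of $\mathbf{Y}^{(k+1)}$ lie in $[-\lambda,\lambda]$ and $\Vert\mathbf{Y}^{(k)}\Vert_F\leq\lambda\sqrt{mn}$ for all $k\geq1$, while $\mathbf{Y}^{(0)}$ is a fixed input.

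For the first claim I would use that $\mathbf{L}_H^{(k+1)}$ exactly minimises the coarse Lagrangian $\mathcal{L}_H$ of (\ref{eq:coarse-aug-lagr}) — which is precisely why Algorithm~\ref{alg:ml-ialm} computes it by singular value thresholding of $(\mathbf{D}-\mathbf{S}^{(k)}+\mu_k^{-1}\mathbf{Y}^{(k)})\mathbf{R}$. Using the adjoint identity $\langle\mathbf{Y}^{(k)},[(\mathbf{D}-\mathbf{S}^{(k)})\mathbf{R}-\mathbf{L}]\mathbf{R}^\top\rangle=\langle\mathbf{Y}^{(k)}\mathbf{R},(\mathbf{D}-\mathbf{S}^{(k)})\mathbf{R}-\mathbf{L}\rangle$ on the linear term and differentiating the quadratic one, the stationarity condition $0\in\partial_{\mathbf{L}}\mathcal{L}_H(\mathbf{L}_H^{(k+1)})$ becomes
\begin{equation}
\mathbf{Y}^{(k)}\mathbf{R}+\mu_k\big((\mathbf{D}-\mathbf{S}^{(k)})\mathbf{R}-\mathbf{L}_H^{(k+1)}\big)\in\partial\Vert\mathbf{L}_H^{(k+1)}\Vert_*.
\end{equation}
Right-multiplying the definition (\ref{eq:Yhat}) of $\widehat{\mathbf{Y}}^{(k+1)}$ by $\mathbf{R}$ and using $\mathbf{L}^{(k+1)}=\mathbf{L}_H^{(k+1)}\mathbf{R}^\top$ gives $\mathbf{Y}^{(k)}\mathbf{R}+\mu_k(\mathbf{D}-\mathbf{S}^{(k)})\mathbf{R}=\widehat{\mathbf{Y}}^{(k+1)}\mathbf{R}+\mu_k\mathbf{L}_H^{(k+1)}\mathbf{R}^\top\mathbf{R}$; inserting this into the inclusion above and collecting the $\mathbf{L}_H^{(k+1)}$ terms yields exactly $\widehat{\mathbf{Y}}^{(k+1)}\mathbf{R}+\mu_k\mathbf{L}_H^{(k+1)}(\mathbf{R}^\top\mathbf{R}-\mathbf{I})\in\partial\Vert\mathbf{L}_H^{(k+1)}\Vert_*$, i.e. (\ref{eq:LH-optimality}).

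The only genuinely new feature compared with \cite{lin2010augmented} is the residual $\mu_k\mathbf{L}_H^{(k+1)}(\mathbf{R}^\top\mathbf{R}-\mathbf{I})$ in the first inclusion. When $\mathbf{R}=\mathbf{I}$ (the single-level case) it vanishes and $\widehat{\mathbf{Y}}^{(k+1)}$ itself lies in the bounded set $\partial\Vert\cdot\Vert_*$; here it does not, so $\widehat{\mathbf{Y}}^{(k+1)}$ need not be bounded. I do not expect this to obstruct the present lemma, whose three claims follow once the algebra is tracked carefully and whose boundedness statement rests solely on the $\mathbf{S}$-subgradient argument above; rather, I expect this residual to be the term one must control in the subsequent lemmas, using Assumption~\ref{ass:Rialm} on $\Vert\mathbf{R}\Vert_2$ and $\Vert\mathbf{R}\Vert_*$ together with Assumptions~\ref{ass:nH}--\ref{ass:LH} on the rank and representability of $\mathbf{L}^\star$, where it should feed into an $\epsilon$-type error budget like the one in Theorem~\ref{th:LH-eps}.
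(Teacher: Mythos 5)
Your proposal is correct and follows essentially the same route as the paper: the first inclusion comes from the stationarity condition $\mathbf{0}\in\partial_{\mathbf{L}_H}\mathcal{L}_H$ combined with the definition (\ref{eq:Yhat}) right-multiplied by $\mathbf{R}$ and $\mathbf{L}^{(k+1)}=\mathbf{L}_H^{(k+1)}\mathbf{R}^\top$, and the second from the optimality of the $\mathbf{S}$-update together with the multiplier update. You in fact supply two details the paper's own proof leaves implicit --- the explicit boundedness argument for $\{\mathbf{Y}^{(k)}\}$ via the entrywise bound on $\partial(\lambda\Vert\cdot\Vert_1)$, and the correct reading of (\ref{eq:L-optimality}) as $\mathbf{Y}^{(k+1)}\in\partial(\lambda\Vert\mathbf{S}^{(k+1)}\Vert_1)$ rather than the nuclear-norm subdifferential printed in the statement --- and your closing remark about the residual $\mu_k\mathbf{L}_H^{(k+1)}(\mathbf{R}^\top\mathbf{R}-\mathbf{I})$ correctly anticipates its role in the later lemmas.
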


\begin{proof}
To show (\ref{eq:LH-optimality}) we use the optimality condition of $\mathcal{L}_H$ and the construction $\mathbf{L}^{(k+1)}=\mathbf{L}_H^{(k+1)}\mathbf{R}^\top$. We have
\begin{equation}
\begin{split}
    \mathbf{0}\in & \partial_{\mathbf{L}_H}\mathcal{L}(\mathbf{L}_H^{(k+1)},\mathbf{S}^{(k)},\mathbf{Y}^{(k)},\mu_k) \\
    = & \partial \Vert\mathbf{L}_H^{(k+1)}\Vert_* - \mathbf{Y}^{(k)}\mathbf{R}-\mu_k[(\mathbf{D}-\mathbf{S}^{(k)})\mathbf{R}-\mathbf{L}_H^{(k+1)}] \\
    = & \partial \Vert\mathbf{L}_H^{(k+1)}\Vert_* - \widehat{\mathbf{Y}}^{(k+1)}\mathbf{R} - \mu_k(\mathbf{L}_H^{(k+1)}\mathbf{R}^\top\mathbf{R}-\mathbf{L}_H^{(k+1)}).
\end{split}
\end{equation}

Similarly, using the optimality condition for updating $\mathbf{S}^{(k+1)}$ we can show (\ref{eq:L-optimality}):
\begin{equation}
\mathbf{Y}^{(k+1)} \in \partial\Vert\mathbf{S}^{(k+1)}\Vert_*.
\end{equation}

\end{proof}

\begin{lemma}[\cite{lin2010augmented}, Lemma 2] \label{l:lemma2}
\begin{equation}
\begin{split}
      & \Vert\mathbf{S}^{(k+1)}-\mathbf{S}^\star\Vert_F^2+\mu_k^{-2}\Vert\mathbf{Y}^{(k+1)}-\mathbf{Y}^\star\Vert_F^2 \\
    = & \Vert\mathbf{S}^{(k)}-\mathbf{S}^\star\Vert_F^2+\mu_k^{-2}\Vert\mathbf{Y}^{(k)}-\mathbf{Y}^\star\Vert_F^2-\Vert\mathbf{S}^{(k+1)}-\mathbf{S}^{(k)}\Vert_F^2-\mu_k^{-2}\Vert\mathbf{Y}^{(k+1)}-\mathbf{Y}^{(k)}\Vert_F^2 \\
      & -2\mu_k^{-1}(\langle\mathbf{Y}^{(k+1)}-\mathbf{Y}^{(k)},\mathbf{S}^{(k+1)}-\mathbf{S}^{(k)}\rangle + \langle\mathbf{L}^{(k+1)}-\mathbf{L}^\star,\widehat{\mathbf{Y}}^{(k+1)}-\mathbf{Y}^\star\rangle + \langle\mathbf{S}^{(k+1)}-\mathbf{S}^\star,\mathbf{Y}^{(k+1)}-\mathbf{Y}^\star\rangle).
\end{split}
\end{equation}
\end{lemma}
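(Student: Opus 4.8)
The plan is to reduce the claimed identity to elementary algebra, following the proof of Lemma~2 in \cite{lin2010augmented}. The crucial point is that this identity uses \emph{only} the update rules of Algorithm~\ref{alg:ml-ialm} for $\mathbf{Y}^{(k+1)}$, the definition (\ref{eq:Yhat}) of $\widehat{\mathbf{Y}}^{(k+1)}$, and the primal feasibility $\mathbf{D}=\mathbf{L}^\star+\mathbf{S}^\star$ of the optimum; it never invokes the (coarse) optimality of $\mathbf{L}^{(k+1)}$, so the multilevel modification is invisible here and the single-level proof carries over verbatim.

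First I would apply the ``law of cosines'' identity $\Vert a-c\Vert_F^2=\Vert b-c\Vert_F^2-\Vert a-b\Vert_F^2+2\langle a-b,\,a-c\rangle$ twice: with $(a,b,c)=(\mathbf{S}^{(k+1)},\mathbf{S}^{(k)},\mathbf{S}^\star)$, and, after scaling by $\mu_k^{-2}$, with $(a,b,c)=(\mathbf{Y}^{(k+1)},\mathbf{Y}^{(k)},\mathbf{Y}^\star)$. Adding the two expansions and matching them against the target, all the squared terms line up exactly, so the lemma reduces to the cross-term identity
\[
\langle \mathbf{S}^{(k+1)}-\mathbf{S}^{(k)},\,\mathbf{S}^{(k+1)}-\mathbf{S}^\star\rangle + \mu_k^{-2}\langle \mathbf{Y}^{(k+1)}-\mathbf{Y}^{(k)},\,\mathbf{Y}^{(k+1)}-\mathbf{Y}^\star\rangle = -\mu_k^{-1}\bigl(\langle \mathbf{Y}^{(k+1)}-\mathbf{Y}^{(k)},\mathbf{S}^{(k+1)}-\mathbf{S}^{(k)}\rangle + \langle \mathbf{L}^{(k+1)}-\mathbf{L}^\star,\widehat{\mathbf{Y}}^{(k+1)}-\mathbf{Y}^\star\rangle + \langle \mathbf{S}^{(k+1)}-\mathbf{S}^\star,\mathbf{Y}^{(k+1)}-\mathbf{Y}^\star\rangle\bigr).
\]

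To establish this I would record two auxiliary identities. Subtracting (\ref{eq:Yhat}) from the update $\mathbf{Y}^{(k+1)}=\mathbf{Y}^{(k)}+\mu_k(\mathbf{D}-\mathbf{L}^{(k+1)}-\mathbf{S}^{(k+1)})$ gives $\widehat{\mathbf{Y}}^{(k+1)}-\mathbf{Y}^{(k+1)}=\mu_k(\mathbf{S}^{(k+1)}-\mathbf{S}^{(k)})$; and inserting $\mathbf{D}=\mathbf{L}^\star+\mathbf{S}^\star$ into the same update gives $\mu_k^{-1}(\mathbf{Y}^{(k+1)}-\mathbf{Y}^{(k)})=-(\mathbf{L}^{(k+1)}-\mathbf{L}^\star)-(\mathbf{S}^{(k+1)}-\mathbf{S}^\star)$. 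Using the first identity, the term $\langle \mathbf{L}^{(k+1)}-\mathbf{L}^\star,\widehat{\mathbf{Y}}^{(k+1)}-\mathbf{Y}^\star\rangle$ splits as $\langle \mathbf{L}^{(k+1)}-\mathbf{L}^\star,\mathbf{Y}^{(k+1)}-\mathbf{Y}^\star\rangle + \mu_k\langle \mathbf{L}^{(k+1)}-\mathbf{L}^\star,\mathbf{S}^{(k+1)}-\mathbf{S}^{(k)}\rangle$; grouping the two resulting $\langle\,\cdot\,,\mathbf{Y}^{(k+1)}-\mathbf{Y}^\star\rangle$ contributions and applying the second identity turns them into $\mu_k^{-2}\langle\mathbf{Y}^{(k+1)}-\mathbf{Y}^{(k)},\mathbf{Y}^{(k+1)}-\mathbf{Y}^\star\rangle$, which cancels the matching term on the left-hand side. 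What is left then collapses to $\langle \mathbf{S}^{(k+1)}-\mathbf{S}^{(k)},\ (\mathbf{S}^{(k+1)}-\mathbf{S}^\star)+\mu_k^{-1}(\mathbf{Y}^{(k+1)}-\mathbf{Y}^{(k)})+(\mathbf{L}^{(k+1)}-\mathbf{L}^\star)\rangle$, and the bracketed matrix is $\mathbf{0}$ by the second identity once more, which closes the argument.

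I do not anticipate a genuine obstacle here: the whole content is sign- and $\mu_k$-bookkeeping, and the only point requiring attention is to retain $\widehat{\mathbf{Y}}^{(k+1)}$ (rather than $\mathbf{Y}^{(k+1)}$) in the $\mathbf{L}$-cross term until it is rewritten via $\widehat{\mathbf{Y}}^{(k+1)}-\mathbf{Y}^{(k+1)}=\mu_k(\mathbf{S}^{(k+1)}-\mathbf{S}^{(k)})$. It is worth emphasising that the coarse correction $\mathbf{R}^\top\mathbf{R}-\mathbf{I}$ plays no role in this lemma; the place where the restriction operator genuinely enters the convergence analysis is the subgradient inclusion (\ref{eq:LH-optimality}) of Lemma~\ref{l:bounded-sequences}, which will be needed in the next step to control the term $\langle \mathbf{L}^{(k+1)}-\mathbf{L}^\star,\widehat{\mathbf{Y}}^{(k+1)}-\mathbf{Y}^\star\rangle$ appearing above.
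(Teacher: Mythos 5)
Your proposal is correct and matches the paper's approach: the paper's proof is precisely the observation that Lemma~2 of \cite{lin2010augmented} uses only the $\mathbf{Y}$-update and the feasibility of $(\mathbf{L}^\star,\mathbf{S}^\star)$, never the rule producing $\mathbf{L}^{(k+1)}$, so the single-level argument carries over verbatim. Your explicit reconstruction of that algebra (the two cosine expansions, the identities $\widehat{\mathbf{Y}}^{(k+1)}-\mathbf{Y}^{(k+1)}=\mu_k(\mathbf{S}^{(k+1)}-\mathbf{S}^{(k)})$ and $\mu_k^{-1}(\mathbf{Y}^{(k+1)}-\mathbf{Y}^{(k)})=-(\mathbf{L}^{(k+1)}-\mathbf{L}^\star)-(\mathbf{S}^{(k+1)}-\mathbf{S}^\star)$) checks out.
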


\begin{proof}
Since the proof of Lemma 2 in \cite{lin2010augmented} relies only on the optimality of $\mathbf{L}^\star$ and $\mathbf{S}^\star$ and the update formula for $\mathbf{Y}^{(k+1)}$, but not on the update rule for $\mathbf{L}^{(k+1)}$ (the only multilevel update part of ML-IALM), then its proof can be exactly repeated for Algorithm \ref{alg:ml-ialm}.
\end{proof}

\begin{lemma} \label{l:subgrad-inequality-L}
If $\Vert\mathbf{R}\Vert_2\leq 1$, then under Assumption \ref{ass:LH}
\begin{equation} \label{eq:subgrad-inequality-L}
    \langle\mathbf{L}^{(k+1)}-\mathbf{L}^\star,\widehat{\mathbf{Y}}^{(k+1)}-\mathbf{Y}^\star\rangle \geq -\epsilon -\mu_k \Delta_{k+1},
\end{equation}
for $\epsilon$ defined as in Theorem \ref{th:LH-eps} and $\Delta_{k+1} =\langle\mathbf{L}_H^{(k+1)}(\mathbf{R}^\top\mathbf{R}-\mathbf{I}), \mathbf{L}_H^{(k+1)} - \mathbf{L}_H^\star\rangle$.
\end{lemma}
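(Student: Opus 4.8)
The plan is to reduce the claimed inequality to two subgradient inequalities — one living on the coarse model and one on the fine model — and to absorb the discrepancy between them into $\epsilon$ via Theorem~\ref{th:LH-eps}. First I would split
\begin{equation}
\langle\mathbf{L}^{(k+1)}-\mathbf{L}^\star,\widehat{\mathbf{Y}}^{(k+1)}-\mathbf{Y}^\star\rangle=\langle\mathbf{L}^{(k+1)}-\mathbf{L}^\star,\widehat{\mathbf{Y}}^{(k+1)}\rangle-\langle\mathbf{L}^{(k+1)}-\mathbf{L}^\star,\mathbf{Y}^\star\rangle
\end{equation}
and treat the two pieces separately.

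For the $\mathbf{Y}^\star$ piece I would invoke the fine-level optimality condition $\mathbf{Y}^\star\in\partial\Vert\mathbf{L}^\star\Vert_*$ for problem (\ref{eq:rpca}) — the same optimality of $\mathbf{L}^\star$ used in \cite{lin2010augmented} and in the proof of Lemma~\ref{l:lemma2} — which gives $\langle\mathbf{Y}^\star,\mathbf{L}^{(k+1)}-\mathbf{L}^\star\rangle\le\Vert\mathbf{L}^{(k+1)}\Vert_*-\Vert\mathbf{L}^\star\Vert_*$, hence $-\langle\mathbf{L}^{(k+1)}-\mathbf{L}^\star,\mathbf{Y}^\star\rangle\ge\Vert\mathbf{L}^\star\Vert_*-\Vert\mathbf{L}^{(k+1)}\Vert_*$. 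For the $\widehat{\mathbf{Y}}^{(k+1)}$ piece I would first descend to the coarse variables using $\mathbf{L}^{(k+1)}=\mathbf{L}_H^{(k+1)}\mathbf{R}^\top$ and, by Assumption~\ref{ass:LH}, $\mathbf{L}^\star=\mathbf{L}_H^\star\mathbf{R}^\top$, so that $\langle\mathbf{L}^{(k+1)}-\mathbf{L}^\star,\widehat{\mathbf{Y}}^{(k+1)}\rangle=\langle\mathbf{L}_H^{(k+1)}-\mathbf{L}_H^\star,\widehat{\mathbf{Y}}^{(k+1)}\mathbf{R}\rangle$. Then I would substitute the coarse optimality relation (\ref{eq:LH-optimality}) from Lemma~\ref{l:bounded-sequences}: writing $\mathbf{G}:=\widehat{\mathbf{Y}}^{(k+1)}\mathbf{R}+\mu_k\mathbf{L}_H^{(k+1)}(\mathbf{R}^\top\mathbf{R}-\mathbf{I})\in\partial\Vert\mathbf{L}_H^{(k+1)}\Vert_*$, this piece equals $\langle\mathbf{G},\mathbf{L}_H^{(k+1)}-\mathbf{L}_H^\star\rangle-\mu_k\Delta_{k+1}$ (here $\Delta_{k+1}$ enters with the correct sign because the Frobenius inner product is symmetric), and the subgradient inequality at $\mathbf{L}_H^{(k+1)}$ bounds $\langle\mathbf{G},\mathbf{L}_H^{(k+1)}-\mathbf{L}_H^\star\rangle\ge\Vert\mathbf{L}_H^{(k+1)}\Vert_*-\Vert\mathbf{L}_H^\star\Vert_*$.

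Adding the two bounds, the right-hand side becomes
\begin{equation}
\bigl(\Vert\mathbf{L}_H^{(k+1)}\Vert_*-\Vert\mathbf{L}_H^{(k+1)}\mathbf{R}^\top\Vert_*\bigr)+\bigl(\Vert\mathbf{L}_H^\star\mathbf{R}^\top\Vert_*-\Vert\mathbf{L}_H^\star\Vert_*\bigr)-\mu_k\Delta_{k+1}.
\end{equation}
The first bracket is nonnegative by part~2 of Theorem~\ref{th:LH-eps}, and this is precisely where the hypothesis $\Vert\mathbf{R}\Vert_2\le1$ enters; the second bracket is at least $-\epsilon$ by part~1 of Theorem~\ref{th:LH-eps} applied with $\mathbf{L}_H=\mathbf{L}_H^\star$, which is the $\epsilon$ appearing in the statement. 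This yields exactly $\ge-\epsilon-\mu_k\Delta_{k+1}$.

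The individual steps — the two subgradient inequalities and the substitution of (\ref{eq:LH-optimality}) — are routine; the part needing care is the bookkeeping of which nuclear norms are evaluated at coarse matrices ($\mathbf{L}_H$) versus at their lifts ($\mathbf{L}_H\mathbf{R}^\top$), so that Theorem~\ref{th:LH-eps} is invoked in the right direction for each mismatch term: the $\le$ direction for the iterate term $\mathbf{L}_H^{(k+1)}$ and the $\ge\,\Vert\cdot\Vert_*-\epsilon$ direction for the optimum term $\mathbf{L}_H^\star$. It is also worth noting explicitly that $\mathbf{L}_H^\star$ from Assumption~\ref{ass:LH} need not be unique: any fixed representative suffices, and $\epsilon$ is the associated constant.
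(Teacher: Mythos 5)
Your proof is correct and follows essentially the same route as the paper's: both rest on the fine-level subgradient inequality from $\mathbf{Y}^\star\in\partial\Vert\mathbf{L}^\star\Vert_*$, the coarse optimality condition (\ref{eq:LH-optimality}), part~1 of Theorem~\ref{th:LH-eps} at $\mathbf{L}_H^\star$ to produce the $-\epsilon$, and part~2 at $\mathbf{L}_H^{(k+1)}$ where $\Vert\mathbf{R}\Vert_2\leq 1$ is used. The only difference is presentational --- you split the inner product before invoking the two subgradient inequalities, whereas the paper adds the two inequalities first --- and your version has the minor virtue of making explicit the use of part~2 of Theorem~\ref{th:LH-eps}, which the paper leaves implicit.
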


\begin{proof}
Since $\mathbf{Y}^\star\in\partial\Vert\mathbf{L}^\star\Vert_*$, we have 
\begin{equation}
    \Vert\mathbf{L}_H^{(k+1)}\mathbf{R}^\top\Vert_* -\Vert\mathbf{L}_H^\star\mathbf{R}^\top\Vert_* \geq \langle\mathbf{Y}^\star,\mathbf{L}^{(k+1)} - \mathbf{L}^\star\rangle.
\end{equation}
Then applying Theorem \ref{th:LH-eps} with $\mathbf{L}_H = \mathbf{L}_H^\star$ we derive:

\begin{equation} \label{eq:lemma3-eq1}
    \Vert\mathbf{L}_H^{(k+1)}\mathbf{R}^\top\Vert_* -\Vert\mathbf{L}_H^\star\Vert_* \geq \langle\mathbf{Y}^\star, \mathbf{L}^{(k+1)} -\mathbf{L}^\star\rangle - \epsilon.
\end{equation}
On the other hand, from (\ref{eq:LH-optimality}) we have

\begin{equation} \label{eq:lemma3-eq2}
    \Vert\mathbf{L}_H^\star\Vert_* - \Vert\mathbf{L}_H^{(k+1)}\Vert_* \geq \langle \widehat{\mathbf{Y}}^{(k+1)}\mathbf{R} + \mu_k \mathbf{L}_H^{(k+1)}(\mathbf{R}^\top\mathbf{R}-\mathbf{I}), \mathbf{L}_H^\star - \mathbf{L}_H^{(k+1)} \rangle.
\end{equation}
Then adding (\ref{eq:lemma3-eq1}) and (\ref{eq:lemma3-eq2}) and using Assumption \ref{ass:LH} we get

\begin{equation}
    \Vert\mathbf{L}_H^{(k+1)}\mathbf{R}^\top\Vert_* - \Vert\mathbf{L}_H^{(k+1)}\Vert_* \geq \langle\mathbf{Y}^\star - \widehat{\mathbf{Y}}^{(k+1)}, \mathbf{L}^{(k+1)} - \mathbf{L}^\star\rangle - \epsilon -\mu_k \langle\mathbf{L}_H^{(k+1)}(\mathbf{R}^\top\mathbf{R}-\mathbf{I}), \mathbf{L}_H^{(k+1)} - \mathbf{L}_H^\star\rangle.
\end{equation}

We finish the proof applying the construction $\mathbf{L}^{(k+1)}=\mathbf{L}_H^{(k+1)}\mathbf{R}^\top$ and denoting $\Delta_{k+1} := \langle\mathbf{L}_H^{(k+1)}(\mathbf{R}^\top\mathbf{R}-\mathbf{I}), \mathbf{L}_H^{(k+1)} - \mathbf{L}_H^\star\rangle$.

\end{proof}

\begin{lemma} \label{l:lemma4}
Let $\Delta_k$ be defined as in Lemma \ref{l:subgrad-inequality-L} and define
\begin{equation}
\begin{split}
c_k & := \mu_k^{-1} (\langle\mathbf{Y}^{(k+1)}-\mathbf{Y}^{(k)},\mathbf{S}^{(k+1)}-\mathbf{S}^{(k)}\rangle + \langle\mathbf{L}^{(k+1)}-\mathbf{L}^\star,\widehat{\mathbf{Y}}^{(k+1)}-\mathbf{Y}^\star\rangle + \\ & \langle\mathbf{S}^{(k+1)}-\mathbf{S}^\star,\mathbf{Y}^{(k+1)}-\mathbf{Y}^\star\rangle).
\end{split}
\end{equation}
Then if $\mu_k$ is non-decreasing, then
\begin{itemize}
\item $c_k \geq -\epsilon\mu_k^{-1} -\Delta_{k+1}$, and
\item $\sum_{k=1}^{k=+\infty} c_k < +\infty$.
\end{itemize}

\end{lemma}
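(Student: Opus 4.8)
The plan is to follow the structure of the analogous lemma (Lemma 3) in \cite{lin2010augmented}, substituting the multilevel lower bound from Lemma \ref{l:subgrad-inequality-L} wherever the original argument uses the optimality of $\mathbf{L}^{(k+1)}$. For the first bullet, I would expand $c_k$ term by term. The term $\langle\mathbf{S}^{(k+1)}-\mathbf{S}^\star,\mathbf{Y}^{(k+1)}-\mathbf{Y}^\star\rangle$ is nonnegative because $\mathbf{Y}^{(k+1)}\in\partial\Vert\mathbf{S}^{(k+1)}\Vert_*$ (Lemma \ref{l:bounded-sequences}, part 2) and $\mathbf{Y}^\star\in\partial\Vert\mathbf{S}^\star\Vert_1$ at the optimum, so monotonicity of the subdifferential of a convex function gives $\langle\mathbf{S}^{(k+1)}-\mathbf{S}^\star,\mathbf{Y}^{(k+1)}-\mathbf{Y}^\star\rangle\ge 0$. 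Similarly the term $\langle\mathbf{Y}^{(k+1)}-\mathbf{Y}^{(k)},\mathbf{S}^{(k+1)}-\mathbf{S}^{(k)}\rangle$ is nonnegative by the same monotonicity argument applied at consecutive iterates (both $\mathbf{Y}^{(k+1)}$ and $\mathbf{Y}^{(k)}$ are subgradients of the nuclear/$\ell_1$ norm at $\mathbf{S}^{(k+1)}$ and $\mathbf{S}^{(k)}$ respectively). The only term that is \emph{not} automatically nonnegative is $\langle\mathbf{L}^{(k+1)}-\mathbf{L}^\star,\widehat{\mathbf{Y}}^{(k+1)}-\mathbf{Y}^\star\rangle$, and this is precisely where Lemma \ref{l:subgrad-inequality-L} enters: it gives $\langle\mathbf{L}^{(k+1)}-\mathbf{L}^\star,\widehat{\mathbf{Y}}^{(k+1)}-\mathbf{Y}^\star\rangle\ge -\epsilon-\mu_k\Delta_{k+1}$. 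Combining, $c_k\ge \mu_k^{-1}(0 + (-\epsilon-\mu_k\Delta_{k+1}) + 0) = -\epsilon\mu_k^{-1}-\Delta_{k+1}$, which is the first claim.

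For the second bullet, the idea is to sum the identity in Lemma \ref{l:lemma2} over $k$. Rearranging that identity gives
\begin{equation}
2c_k = \Vert\mathbf{S}^{(k)}-\mathbf{S}^\star\Vert_F^2+\mu_k^{-2}\Vert\mathbf{Y}^{(k)}-\mathbf{Y}^\star\Vert_F^2 - \Vert\mathbf{S}^{(k+1)}-\mathbf{S}^\star\Vert_F^2-\mu_k^{-2}\Vert\mathbf{Y}^{(k+1)}-\mathbf{Y}^\star\Vert_F^2 - \Vert\mathbf{S}^{(k+1)}-\mathbf{S}^{(k)}\Vert_F^2 - \mu_k^{-2}\Vert\mathbf{Y}^{(k+1)}-\mathbf{Y}^{(k)}\Vert_F^2.
\end{equation}
Since $\mu_k$ is non-decreasing, $\mu_{k+1}^{-2}\le\mu_k^{-2}$, so $\mu_k^{-2}\Vert\mathbf{Y}^{(k)}-\mathbf{Y}^\star\Vert_F^2 \ge \mu_{k-1}^{-2}\cdots$; more directly, the terms $\mu_k^{-2}\Vert\mathbf{Y}^{(k)}-\mathbf{Y}^\star\Vert_F^2$ telescope up to an error controlled by the non-decreasing property, and the sequence $\{\mathbf{Y}^{(k)}\}$ is bounded (Lemma \ref{l:bounded-sequences}), so each of the ``potential'' terms $\Vert\mathbf{S}^{(k)}-\mathbf{S}^\star\Vert_F^2+\mu_k^{-2}\Vert\mathbf{Y}^{(k)}-\mathbf{Y}^\star\Vert_F^2$ is bounded below (by $0$) and above. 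The telescoping sum of the differences is therefore bounded, and the two subtracted squared-difference terms are nonnegative, so $\sum_{k=1}^{\infty}2c_k$ is bounded above. Since we have already shown $c_k\ge -\epsilon\mu_k^{-1}-\Delta_{k+1}$ and (I expect) $\sum_k\mu_k^{-1}<\infty$ in this regime together with summability of $\Delta_{k+1}$ from boundedness of $\{\mathbf{L}_H^{(k+1)}\}$ and the fixed operator $\mathbf{R}^\top\mathbf{R}-\mathbf{I}$, the partial sums of $c_k$ are also bounded below; hence $\sum_k c_k$ converges (is finite).

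The main obstacle I anticipate is the lower bound on the partial sums in the second bullet: the statement $\sum c_k<+\infty$ only controls the sum from above via telescoping, so to conclude finiteness (as opposed to $+\infty$) one must also rule out $-\infty$, which requires knowing that $\sum_k\epsilon\mu_k^{-1}$ and $\sum_k\Delta_{k+1}$ are finite. The first needs an extra hypothesis on $\{\mu_k\}$ (e.g. $\sum\mu_k^{-1}<\infty$, or a bounded $\epsilon$ combined with a summable tail), and the second needs $\Delta_{k+1}=\langle\mathbf{L}_H^{(k+1)}(\mathbf{R}^\top\mathbf{R}-\mathbf{I}),\mathbf{L}_H^{(k+1)}-\mathbf{L}_H^\star\rangle$ to be summable, which should follow from boundedness of $\{\mathbf{L}_H^{(k+1)}\}$ (a consequence of boundedness of $\mathbf{M}_H^{(k)}$, itself a consequence of the boundedness of $\mathbf{S}^{(k)}$, $\mathbf{Y}^{(k)}$) together with decay of $\mu_k^{-1}$ — so some care is needed in pinning down exactly which term multiplies $\Delta_{k+1}$ in the summation and whether an additional boundedness lemma for $\{\mathbf{L}_H^{(k)}\}$ must be invoked first. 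I would handle this by first recording boundedness of $\{\mathbf{L}^{(k)}\}$ and $\{\mathbf{L}_H^{(k)}\}$, then splitting the sum $\sum c_k$ into the manifestly-convergent telescoping part and the error part $\sum(\epsilon\mu_k^{-1}+\Delta_{k+1})$, estimating the latter under the standing assumptions on $\mu_k$.
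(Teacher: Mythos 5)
Your proof is correct and follows the paper's argument essentially verbatim: the first bullet combines the two subgradient-monotonicity inequalities for the $\mathbf{S}$ terms with the bound from Lemma \ref{l:subgrad-inequality-L} on the $\mathbf{L}$ term, and the second bullet drops the nonnegative squared-difference terms in Lemma \ref{l:lemma2} and uses $\mu_{k+1}\geq\mu_k$ to obtain the telescoping bound $2c_k \leq b_k - b_{k+1}$ with $b_k := \Vert\mathbf{S}^{(k)}-\mathbf{S}^\star\Vert_F^2+\mu_k^{-2}\Vert\mathbf{Y}^{(k)}-\mathbf{Y}^\star\Vert_F^2 \geq 0$. Your closing worry about ruling out $-\infty$ is unnecessary for the statement as written --- $\sum_k c_k < +\infty$ only asserts that the partial sums are bounded above, which the telescoping gives immediately --- and the extra hypothesis $\sum_k\mu_k^{-1}<\infty$ you contemplate would in fact contradict the standing assumption $\sum_k\mu_k^{-1}=+\infty$ of Theorem \ref{th:mlialm-convergence}; the paper, like you, establishes only the upper bound.
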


\begin{proof}
Let $(\mathbf{L}^\star,\mathbf{S}^\star,\mathbf{Y}^\star)$ be a saddle point of the Lagrangian of (\ref{eq:rpca}). So we have
\begin{equation}
    \mathbf{Y}^\star \in\partial\Vert\mathbf{L}^\star\Vert_*,\quad\quad\quad \mathbf{Y}^\star\in\partial\Vert\lambda\mathbf{S}^\star\Vert_1.
\end{equation}
Then from Lemma 3 of \cite{lin2010augmented} and $\mathbf{Y}^{(k+1)}\in\partial\Vert\lambda\mathbf{S}^{(k+1)}\Vert_1$ we have
\begin{equation} \label{eq:subgrad-inequalities-S}
\begin{split}
    \langle\mathbf{S}^{(k+1)}-\mathbf{S}^\star, \mathbf{Y}^{(k+1)}-\mathbf{Y}^\star \rangle \geq 0, \\
    \langle\mathbf{S}^{(k+1)}-\mathbf{S}^{(k)}, \mathbf{Y}^{(k+1)}-\mathbf{Y}^{(k)} \rangle \geq 0.
\end{split}
\end{equation}
Adding (\ref{eq:subgrad-inequality-L}) and the two inequalities of (\ref{eq:subgrad-inequalities-S}) we get
\begin{equation}
\begin{split}
 & -\epsilon -\mu_k \Delta_{k+1} \\
 \leq & \langle\mathbf{Y}^{(k+1)}-\mathbf{Y}^{(k)},\mathbf{S}^{(k+1)}-\mathbf{S}^{(k)}\rangle + \langle\mathbf{L}^{(k+1)}-\mathbf{L}^\star,\widehat{\mathbf{Y}}^{(k+1)}-\mathbf{Y}^\star\rangle + \langle\mathbf{S}^{(k+1)}-\mathbf{S}^\star,\mathbf{Y}^{(k+1)}-\mathbf{Y}^\star\rangle.
\end{split}
\end{equation}
Showing that $c_k \geq -\epsilon\mu_k^{-1} -\Delta_{k+1}$.

Show part 2, we apply Lemma \ref{l:lemma2} and use $\mu_{k+1}\geq\mu_k$ to get
\begin{equation} \label{eq:b_k}
    \Vert\mathbf{S}^{(k+1)}-\mathbf{S}^\star\Vert_F^2+\mu_{k+1}^{-2}\Vert\mathbf{Y}^{(k+1)}-\mathbf{Y}^\star\Vert_F^2 \leq \Vert\mathbf{S}^{(k)}-\mathbf{S}^\star\Vert_F^2+\mu_k^{-2}\Vert\mathbf{Y}^{(k)}-\mathbf{Y}^\star\Vert_F^2 + 2\epsilon\mu_k^{-1} +2\Delta_{k+1}.
\end{equation}
Therefore from Lemma \ref{l:lemma2} we conclude that
\begin{equation*}
\begin{split}
     & 2\mu_k^{-1} (\langle\mathbf{Y}^{(k+1)}-\mathbf{Y}^{(k)},\mathbf{S}^{(k+1)}-\mathbf{S}^{(k)}\rangle + \langle\mathbf{L}^{(k+1)}-\mathbf{L}^\star,\widehat{\mathbf{Y}}^{(k+1)}-\mathbf{Y}^\star\rangle \\
     & + \langle\mathbf{S}^{(k+1)}-\mathbf{S}^\star,\mathbf{Y}^{(k+1)}-\mathbf{Y}^\star\rangle) \\
    \leq & (\Vert\mathbf{S}^{(k)}-\mathbf{S}^\star\Vert_F^2+\mu_k^{-2}\Vert\mathbf{Y}^{(k)}-\mathbf{Y}^\star\Vert_F^2) - (\Vert\mathbf{S}^{(k+1)}-\mathbf{S}^\star\Vert_F^2+\mu_{k+1}^{-2}\Vert\mathbf{Y}^{(k+1)}-\mathbf{Y}^\star\Vert_F^2).
\end{split}
\end{equation*}

\end{proof}

\begin{theorem} [Convergence of Multilevel IALM] \label{th:mlialm-convergence}
For Algorithm \ref{alg:ml-ialm}, if $\{\mu_k\}$ is non-decreasing, $\sum_{k=1}^{+\infty}\mu_k^{-1}=+\infty$ and $\sum_{k=1}^{+\infty}\mu_k^{-2}<+\infty$, then $(\mathbf{L}^{(k)},\mathbf{S}^{(k)})$ asymptotically converges to an approximate solution of (\ref{eq:rpca}).
\end{theorem}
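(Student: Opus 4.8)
The proof will mirror the classical IALM convergence argument of \cite{lin2010augmented}, but carrying the extra error terms $\epsilon$ and $\Delta_{k+1}$ generated by the multilevel (coarse) update through each step. The plan is to first establish that the iterates converge in a Cauchy-like sense, and then identify the limit as an approximate saddle point.

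\textbf{Step 1: Boundedness and a summable telescoping identity.} By Lemma \ref{l:bounded-sequences} the sequence $\{\mathbf{Y}^{(k)}\}$ is bounded, and Lemma \ref{l:lemma2} provides the key recursion for $a_k := \Vert\mathbf{S}^{(k)}-\mathbf{S}^\star\Vert_F^2+\mu_k^{-2}\Vert\mathbf{Y}^{(k)}-\mathbf{Y}^\star\Vert_F^2$. Using Lemma \ref{l:lemma4}, I would rewrite this recursion as
\begin{equation}
a_{k+1} \leq a_k - \Vert\mathbf{S}^{(k+1)}-\mathbf{S}^{(k)}\Vert_F^2 - \mu_k^{-2}\Vert\mathbf{Y}^{(k+1)}-\mathbf{Y}^{(k)}\Vert_F^2 + 2\epsilon\mu_k^{-1} + 2\Delta_{k+1},
\end{equation}
where I absorb the $c_k$ term via its lower bound $c_k \geq -\epsilon\mu_k^{-1}-\Delta_{k+1}$. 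The new feature relative to \cite{lin2010augmented} is the perturbation $2\epsilon\mu_k^{-1}+2\Delta_{k+1}$. Since $\sum_k \mu_k^{-1}=+\infty$, the term $\sum_k \epsilon\mu_k^{-1}$ need not be summable; this is precisely why the theorem only claims convergence to an \emph{approximate} solution. I would argue that $\Delta_{k+1}$ is bounded (the $\mathbf{L}_H^{(k+1)}$ are bounded because $\mathbf{M}_H^{(k)}$ is built from bounded quantities and singular-value thresholding is nonexpansive, and $\mathbf{R}^\top\mathbf{R}-\mathbf{I}$ is fixed), and under Assumption \ref{ass:Rialm} (so $\Vert\mathbf{R}\Vert_2\le 1$) the matrix $\mathbf{R}^\top\mathbf{R}-\mathbf{I}$ is negative semidefinite, which helps control the sign of $\Delta_{k+1}$.

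\textbf{Step 2: Extract convergent subsequence and vanishing increments.} Summing the recursion and using $\sum_k\mu_k^{-2}<+\infty$ together with boundedness of $\{\Delta_{k+1}\}$, I would conclude that $\sum_k \Vert\mathbf{S}^{(k+1)}-\mathbf{S}^{(k)}\Vert_F^2 < +\infty$, hence $\mathbf{S}^{(k+1)}-\mathbf{S}^{(k)}\to 0$, and similarly $\mu_k^{-1}(\mathbf{Y}^{(k+1)}-\mathbf{Y}^{(k)})\to 0$. From the update $\mathbf{Y}^{(k+1)}-\mathbf{Y}^{(k)}=\mu_k(\mathbf{D}-\mathbf{L}^{(k+1)}-\mathbf{S}^{(k+1)})$ this gives $\mathbf{D}-\mathbf{L}^{(k+1)}-\mathbf{S}^{(k+1)}\to 0$, so the feasibility residual vanishes. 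Boundedness of $\{\mathbf{L}^{(k)},\mathbf{S}^{(k)},\mathbf{Y}^{(k)}\}$ then yields a convergent subsequence with limit $(\bar{\mathbf{L}},\bar{\mathbf{S}},\bar{\mathbf{Y}})$ satisfying $\bar{\mathbf{L}}+\bar{\mathbf{S}}=\mathbf{D}$.

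\textbf{Step 3: Identify the limit as an approximate optimum.} Using the subgradient inclusions of Lemma \ref{l:bounded-sequences} — namely $\mathbf{Y}^{(k+1)}\in\partial\Vert\mathbf{S}^{(k+1)}\Vert_*$ (with the $\lambda$ scaling) and the perturbed inclusion \eqref{eq:LH-optimality} for $\mathbf{L}_H^{(k+1)}$ — I would pass to the limit along the subsequence. The $\mathbf{S}$-inclusion passes cleanly to $\bar{\mathbf{Y}}\in\partial\Vert\bar{\mathbf{S}}\Vert_1$ (up to $\lambda$). For $\mathbf{L}$, combining Lemma \ref{l:subgrad-inequality-L} with the convexity inequality for the nuclear norm gives, for any feasible $(\mathbf{L},\mathbf{S})$ of \eqref{eq:rpca}, a bound of the form
\begin{equation}
\Vert\bar{\mathbf{L}}\Vert_* + \lambda\Vert\bar{\mathbf{S}}\Vert_1 \leq \Vert\mathbf{L}\Vert_* + \lambda\Vert\mathbf{S}\Vert_1 + O(\epsilon),
\end{equation}
i.e. the limit is optimal up to an additive error governed by $\epsilon$, the nuclear-norm distortion of $\mathbf{R}$ quantified in Theorem \ref{th:LH-eps}. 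I would spell out this final chain by tracking the constant across the telescoped inequality from Step 1 evaluated at the optimal $(\mathbf{L}^\star,\mathbf{S}^\star,\mathbf{Y}^\star)$.

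\textbf{Main obstacle.} The delicate point is controlling $\Delta_{k+1}$ and the coarse-model subgradient term \eqref{eq:LH-optimality}: unlike the exact IALM, $\widehat{\mathbf{Y}}^{(k+1)}\mathbf{R}$ is a subgradient of $\Vert\mathbf{L}_H^{(k+1)}\Vert_*$ only after a correction by $\mu_k\mathbf{L}_H^{(k+1)}(\mathbf{R}^\top\mathbf{R}-\mathbf{I})$, and this correction is scaled by $\mu_k\to\infty$. Showing that $\mu_k\Delta_{k+1}$ does not blow up — or rather, that in the telescoped sum only $\Delta_{k+1}$ (not $\mu_k\Delta_{k+1}$) appears, after dividing through by $\mu_k$ as in Lemma \ref{l:lemma4} — is the crux, and it rests essentially on the negative semidefiniteness of $\mathbf{R}^\top\mathbf{R}-\mathbf{I}$ and the boundedness of $\{\mathbf{L}_H^{(k)}\}$. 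Establishing a clean, summable bound on $\sum_k \Delta_{k+1}$ (or arguing it is asymptotically negligible relative to the dominant $\epsilon\mu_k^{-1}$ term) is where the argument needs the most care.
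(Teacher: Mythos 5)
Your overall architecture (telescoping the Lemma \ref{l:lemma2} identity, bounding the cross terms via Lemma \ref{l:lemma4}, then identifying the limit through the subgradient inclusions) is the same as the paper's. But Step 2 contains a genuine gap, and it propagates into a conclusion that is strictly stronger than what the hypotheses support. You claim that summing the recursion
$a_{k+1}\leq a_k-\Vert\mathbf{S}^{(k+1)}-\mathbf{S}^{(k)}\Vert_F^2-\mu_k^{-2}\Vert\mathbf{Y}^{(k+1)}-\mathbf{Y}^{(k)}\Vert_F^2+2\epsilon\mu_k^{-1}+2\Delta_{k+1}$
yields $\sum_k\Vert\mathbf{S}^{(k+1)}-\mathbf{S}^{(k)}\Vert_F^2<+\infty$ and hence that the feasibility residual $\mathbf{D}-\mathbf{L}^{(k+1)}-\mathbf{S}^{(k+1)}$ tends to zero. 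That inference requires the perturbation $\sum_k(2\epsilon\mu_k^{-1}+2\Delta_{k+1})$ to be finite, but the theorem \emph{assumes} $\sum_k\mu_k^{-1}=+\infty$, so $\sum_k\epsilon\mu_k^{-1}$ diverges whenever $\epsilon>0$, and boundedness of $\Delta_{k+1}$ (which is all you establish, and is all you \emph{can} establish without more) does not make $\sum_k\Delta_{k+1}$ finite either. You flag exactly this difficulty yourself in the ``Main obstacle'' paragraph, but the body of Step 2 proceeds as if it were resolved.

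The paper takes the weaker—and correct given its lemmas—route: it shows only that $\mu_k^{-2}\Vert\mathbf{Y}^{(k+1)}-\mathbf{Y}^{(k)}\Vert_F^2-2\epsilon\mu_k^{-1}-2[\Delta_{k+1}]_{+}$ has a finite sum, so this quantity tends to zero, which gives $\Vert\mathbf{D}-\mathbf{L}^{(k)}-\mathbf{S}^{(k)}\Vert_F\rightarrow(2[\Delta_{k+1}]_{+})^{1/2}$ rather than to $0$. Accumulation points are therefore only $\delta$-feasible with $\delta=\max_k\Delta_k$, and the final optimality bound is $f^\star+\epsilon+\delta$, not the $\Vert\mathbf{L}\Vert_*+\lambda\Vert\mathbf{S}\Vert_1+O(\epsilon)$ you state in Step 3. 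In short: your limit object is claimed to be exactly feasible and $O(\epsilon)$-optimal; the provable statement (and the one the paper proves) is $\delta$-approximately feasible and $(\epsilon+\delta)$-approximately optimal. To repair your write-up you would either need to prove $\Delta_{k+1}\rightarrow 0$ (not available from the stated assumptions) or weaken Steps 2--3 to carry the $\delta$ term through, as the paper does.
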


\begin{proof}
Similarly to the proof of Lemma \ref{l:lemma4} we have that
\begin{equation}
\begin{split}
	& \mu_k^{-2} \Vert\mathbf{Y}^{(k+1)}-\mathbf{Y}^{(k)} \Vert_F^2 -2\epsilon\mu_k^{-1} -2[\Delta_{k+1}]_{+} \\
    \leq & \mu_k^{-2} \Vert\mathbf{Y}^{(k+1)}-\mathbf{Y}^{(k)} \Vert_F^2 -2\epsilon\mu_k^{-1} -2\Delta_{k+1} \\
    \leq &  (\Vert\mathbf{S}^{(k)}-\mathbf{S}^\star\Vert_F^2+\mu_k^{-2}\Vert\mathbf{Y}^{(k)}-\mathbf{Y}^\star\Vert_F^2) - (\Vert\mathbf{S}^{(k+1)}-\mathbf{S}^\star\Vert_F^2+\mu_{k+1}^{-2}\Vert\mathbf{Y}^{(k+1)}-\mathbf{Y}^\star\Vert_F^2) \\
    & - 2\epsilon\mu_k^{-1} -2\Delta_{k+1} -2\mu_k^{-1}(\langle\mathbf{Y}^{(k+1)}-\mathbf{Y}^{(k)},\mathbf{S}^{(k+1)}-\mathbf{S}^{(k)}\rangle + \langle\mathbf{L}^{(k+1)}-\mathbf{L}^\star,\widehat{\mathbf{Y}}^{(k+1)}-\mathbf{Y}^\star\rangle \\ & + \langle\mathbf{S}^{(k+1)}-\mathbf{S}^\star,\mathbf{Y}^{(k+1)}-\mathbf{Y}^\star\rangle) \\
    \leq & (\Vert\mathbf{S}^{(k)}-\mathbf{S}^\star\Vert_F^2+\mu_k^{-2}\Vert\mathbf{Y}^{(k)}-\mathbf{Y}^\star\Vert_F^2) - (\Vert\mathbf{S}^{(k+1)}-\mathbf{S}^\star\Vert_F^2+\mu_{k+1}^{-2}\Vert\mathbf{Y}^{(k+1)}-\mathbf{Y}^\star\Vert_F^2).
\end{split}
\end{equation}
Therefore,
\begin{equation}
    \sum_{k=1}^{+\infty} (\mu_k^{-2} \Vert\mathbf{Y}^{(k+1)}-\mathbf{Y}^{(k)} \Vert_F^2 -2\epsilon\mu_k^{-1} -2 [\Delta_{k+1}]_{+}) < +\infty.
\end{equation}
Thus, $\mu_k^{-2}\Vert\mathbf{Y}^{(k+1)}-\mathbf{Y}^{(k)}\Vert_F^2 -2\epsilon\mu_k^{-1} -2[\Delta_{k+1}]_{+}\rightarrow 0$, and since $2\epsilon\mu_k^{-1}\rightarrow 0$ we see that 

\begin{equation}
\Vert\mathbf{D}-\mathbf{L}^{(k)}-\mathbf{S}^{(k)}\Vert_F = \mu_{k}^{-1} \Vert\mathbf{Y}^{(k)}-\mathbf{Y}^{(k-1)}\Vert_F\rightarrow (2[\Delta_{k+1}]_{+})^{1/2},
\end{equation}

Moreover, since we showed in Lemma \ref{l:bounded-sequences} that $\{\mathbf{Y}^{(k+1)}\}$ is a bounded sequence, it follows that so is $[\Delta_k]_{+}$. Therefore, denoting $\max \{\Delta_k\} := \delta$ we show that
any accumulation point of $(\mathbf{L}^{(k)},\mathbf{S}^{(k)})$ is a $\delta$-feasible solution.

On the other hand, denote the optimal objective value of problem (\ref{eq:rpca}) by $f^*$. As $\widehat{\mathbf{Y}}^{(k)}\mathbf{R}\in\partial\Vert\mathbf{L}_H^{(k)}\Vert_*$ and $\mathbf{Y}^{(k)}\in\partial(\lambda\Vert\mathbf{S}^{(k)}\Vert_1)$, under Assumption \ref{ass:Rialm} we have
\begin{equation}
\begin{split}
         & \Vert\mathbf{L}^{(k)}\Vert_* + \lambda\Vert\mathbf{S}^{(k)}\Vert_1 \\
    \leq & \Vert\mathbf{L}_H^{(k)}\Vert_* + \lambda\Vert\mathbf{S}^{(k)}\Vert_1 \\
    \leq & \Vert\mathbf{L}_H^\star\Vert_* + \lambda\Vert\mathbf{S}^\star\Vert_1 - \langle\widehat{\mathbf{Y}}^{(k)}\mathbf{R},\mathbf{L}_H^\star-\mathbf{L}_H^{(k)}\rangle - \langle\mathbf{Y}^{(k)},\mathbf{S}^\star-\mathbf{S}^{(k)}\rangle \\
    \leq & \Vert\mathbf{L}^\star\Vert_* + \epsilon + \lambda\Vert\mathbf{S}^\star\Vert_1 - \langle\widehat{\mathbf{Y}}^{(k)},\mathbf{L}^\star-\mathbf{L}^{(k)}\rangle - \langle\mathbf{Y}^{(k)},\mathbf{S}^\star-\mathbf{S}^{(k)}\rangle \\
    =   & f^\star + \langle\mathbf{Y}^\star-\widehat{\mathbf{Y}}^{(k)},\mathbf{L}^\star-\mathbf{L}^{(k)}\rangle + \langle\mathbf{Y}^\star-\mathbf{Y}^{(k)},\mathbf{S}^\star-\mathbf{S}^{(k)}\rangle - \langle\mathbf{Y}^\star,\mathbf{L}^\star-\mathbf{L}^{(k)}+\mathbf{S}^\star-\mathbf{S}^{(k)}\rangle + \epsilon \\
    =   & f^\star + \langle\mathbf{Y}^\star-\widehat{\mathbf{Y}}^{(k)},\mathbf{L}^\star-\mathbf{L}^{(k)}\rangle + \langle\mathbf{Y}^\star-\mathbf{Y}^{(k)},\mathbf{S}^\star-\mathbf{S}^{(k)}\rangle - \langle\mathbf{Y}^\star,\mathbf{D}-\mathbf{L}^{(k)}-\mathbf{S}^{(k)}\rangle + \epsilon.
\end{split}
\end{equation}

Similarly to Lemma \ref{l:lemma4} we notice that from Lemma \ref{l:lemma2}
\begin{equation}
\begin{split}
    & \mu_k^{-1} (\langle\mathbf{L}^{(k)}-\mathbf{L}^\star,\widehat{\mathbf{Y}}^{(k)}-\mathbf{Y}^\star\rangle + \langle\mathbf{S}^{(k)}-\mathbf{S}^\star,\mathbf{Y}^{(k)}-\mathbf{Y}^\star\rangle) \\
    \leq & (\Vert\mathbf{S}^{(k)}-\mathbf{S}^\star\Vert_F^2+\mu_k^{-2}\Vert\mathbf{Y}^{(k)}-\mathbf{Y}^\star\Vert_F^2) - (\Vert\mathbf{S}^{(k+1)}-\mathbf{S}^\star\Vert_F^2+\mu_{k+1}^{-2}\Vert\mathbf{Y}^{(k+1)}-\mathbf{Y}^\star\Vert_F^2).
\end{split}
\end{equation}
And therefore,
\begin{equation}
    \sum_{k=1}^{k=+\infty} \mu_k^{-1} (\langle\mathbf{L}^{(k)}-\mathbf{L}^\star,\widehat{\mathbf{Y}}^{(k)}-\mathbf{Y}^\star\rangle + \langle\mathbf{S}^{(k)}-\mathbf{S}^\star,\mathbf{Y}^{(k)}-\mathbf{Y}^\star\rangle) < +\infty.
\end{equation}

As $\sum_{k=1}^{+\infty}\mu_k^{-1}=+\infty$, there must exist a subsequence $(\mathbf{L}^{(k_j)},\mathbf{S}^{(k_j)})$ such that
\begin{equation}
    \langle\mathbf{L}^{(k_j)}-\mathbf{L}^\star,\widehat{\mathbf{Y}}^{(k_j)}-\mathbf{Y}^\star\rangle + \langle\mathbf{S}^{(k_j)}-\mathbf{S}^\star,\mathbf{Y}^{(k_j)}-\mathbf{Y}^\star\rangle \rightarrow 0.
\end{equation}
Then since $\Vert\mathbf{D}-\mathbf{L}^{(k)}-\mathbf{S}^{(k)}\Vert_F \leq \delta$, we have that 
\begin{equation}
    \lim_{j\rightarrow +\infty} \Vert\mathbf{L}^{(k_j)}\Vert_* + \lambda\Vert\mathbf{S}^{(k_j)}\Vert_1 \leq f^\star + \epsilon + \delta.
\end{equation}
So $(\mathbf{L}^{(k_j)},\mathbf{S}^{(k_j)})$ approaches to an $(\epsilon+\delta)$-approximate solution of problem (\ref{eq:rpca}).

\end{proof}

Notice that Theorem \ref{th:mlialm-convergence} gives a similar convergence results as Theorem \ref{th:ialm-convergence}, meaning that one should expect a similar number of iterations for IALM and ML-IALM methods. This is indeed the case, as observed from empirical studies. However, since ML-IALM performs SVDs on much smaller dimensional matrices, each iteration is significantly cheaper.Of course, as opposed to the original IALM algorithm, here we only showed an approximate convergence. However, as several numerical experiments will demonstrate, the approximation error is practically negligible.
\subsection{Multilevel Frank-Wolfe}

In this section we use operator notation for the linear restriction operator, i.e. $\mathcal{R}:\mathcal{H}\rightarrow\mathcal{H}_H$ is a linear operator from the original space $\mathcal{H}$ to the \textit{coarse space} $\mathcal{H}_H$. $\mathcal{H}$ and $\mathcal{H}_H$ are both Hilbert spaces endowed with inner products and $\mathcal{H}_H$ has lower dimension. In the next subsection we will see what $\mathcal{H}$, $\mathcal{H}_H$ and $\mathcal{R}$ are for the CPCP model.

First we create a coarse model for the gradient by applying the restriction operator $\mathcal{R}$, then solve the linear optimisation oracle over the coarse gradient, then lift the solution back to the original dimension applying the transpose of the restriction operator. For the algorithm we use a convex set $\mathcal{D}_H\subseteq\mathcal{H}_H$ such that for every $\mathbf{x}_H\in\mathcal{D}_H$ it holds that $\mathcal{R}^\top(\mathbf{x}_H)\in\mathcal{D}$. The method is given in Algorithm \ref{alg:ml-fw}.

\begin{algorithm}
  \caption{Multilevel Frank-Wolfe (ML-FW) \label{alg:ml-fw}}
  \begin{algorithmic}[1]
    \Require{$\mathbf{x}_H^{(0)}\in\mathcal{D}_H$}
    \Let{$\mathbf{x}^{(0)}$}{$\mathcal{R}^\top(\mathbf{x}_H^{(0)})$}
    \For{$k \gets 0,1, \textrm{ to } \dots$}
      \State $\mathbf{v}_H^{(k)} \in\argmin_{\mathbf{v}\in\mathcal{D}_H}\langle\mathbf{v},\mathcal{R}(\nabla f(\mathbf{x}^{(k)}))\rangle$
      \Let{$\mathbf{v}^{(k)}$}{$\mathcal{R}^\top(\mathbf{v}_H^{(k)})$}
      \Let{$\gamma$}{$\frac{2}{k+2}$}
      \State Set $\mathbf{x}^{(k+1)}\in\mathcal{D}$ so that $f(\mathbf{x}^{(k+1)})\leq f(\mathbf{x}^{(k)}+\gamma(\mathbf{v}^{(k)}-\mathbf{x}^{(k)}))$
    \EndFor \\
    \Return{$\mathbf{x}^{(k+1)}$}
  \end{algorithmic}
\end{algorithm}

Using techniques similar to the original proof \cite{mu2016scalable}, it can be shown that the ML-FW method converges to a point obtained from the coarse level at a $\mathcal{O}(1/k)$ rate in function values.

\begin{theorem} \label{th:mlfw-convergence}
For any $\mathbf{x}_H^\star\in\mathcal{D}_H$ and for $\{\mathbf{x}^{(k)}\}$ generated by Algorithm \ref{alg:ml-fw}, we have for any $\mathbf{x}_H^\star\in\mathcal{H}_H$ and $k=0,1,2,..$
\begin{equation}
    f(\mathbf{x}^{(k)})-f(\mathcal{R}^\top(\mathbf{x}_H^\star)) \leq \frac{2LD^2}{k+2}.
\end{equation}
\end{theorem}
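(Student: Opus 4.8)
The plan is to reproduce the classical Frank--Wolfe convergence argument, the single new ingredient being that the linear minimization performed on the coarse space still certifies a Frank--Wolfe-type decrease against every fine point of the form $\mathcal{R}^\top(\mathbf{x}_H^\star)$ with $\mathbf{x}_H^\star\in\mathcal{D}_H$. First I would fix such an $\mathbf{x}_H^\star$, set $\mathbf{x}^\star:=\mathcal{R}^\top(\mathbf{x}_H^\star)$ (which lies in $\mathcal{D}$ by the defining property of $\mathcal{D}_H$), and abbreviate $h_k:=f(\mathbf{x}^{(k)})-f(\mathbf{x}^\star)$; note that all iterates $\mathbf{x}^{(k)},\mathbf{v}^{(k)}$ stay in $\mathcal{D}$, so $\|\mathbf{v}^{(k)}-\mathbf{x}^{(k)}\|\le D$.

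Next, from the $L$-Lipschitz gradient (descent lemma) and the fact that $\mathbf{x}^{(k+1)}$ is no worse than $\mathbf{x}^{(k)}+\gamma(\mathbf{v}^{(k)}-\mathbf{x}^{(k)})$, I get $f(\mathbf{x}^{(k+1)})\le f(\mathbf{x}^{(k)})+\gamma\langle\nabla f(\mathbf{x}^{(k)}),\mathbf{v}^{(k)}-\mathbf{x}^{(k)}\rangle+\tfrac{L\gamma^2}{2}\|\mathbf{v}^{(k)}-\mathbf{x}^{(k)}\|^2$. The key step is to bound the linear term: since $\mathbf{v}^{(k)}=\mathcal{R}^\top(\mathbf{v}_H^{(k)})$ and $\mathcal{R}^\top$ is the adjoint of $\mathcal{R}$, we have $\langle\nabla f(\mathbf{x}^{(k)}),\mathbf{v}^{(k)}\rangle=\langle\mathcal{R}(\nabla f(\mathbf{x}^{(k)})),\mathbf{v}_H^{(k)}\rangle$, and because $\mathbf{v}_H^{(k)}$ minimizes $\langle\,\cdot\,,\mathcal{R}(\nabla f(\mathbf{x}^{(k)}))\rangle$ over $\mathcal{D}_H\ni\mathbf{x}_H^\star$ this is at most $\langle\mathcal{R}(\nabla f(\mathbf{x}^{(k)})),\mathbf{x}_H^\star\rangle=\langle\nabla f(\mathbf{x}^{(k)}),\mathbf{x}^\star\rangle$; convexity of $f$ then gives $\langle\nabla f(\mathbf{x}^{(k)}),\mathbf{v}^{(k)}-\mathbf{x}^{(k)}\rangle\le f(\mathbf{x}^\star)-f(\mathbf{x}^{(k)})=-h_k$. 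Plugging this and $\gamma=2/(k+2)$ into the descent inequality yields the recursion $h_{k+1}\le(1-\gamma)h_k+\tfrac{\gamma^2LD^2}{2}$, from which $h_k\le 2LD^2/(k+2)$ follows by the usual induction: the first step with $\gamma=1$ gives $h_1\le LD^2/2\le 2LD^2/3$, and the inductive step uses $(k+1)(k+3)\le(k+2)^2$.

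The only genuine obstacle is the key step, and there it is a matter of being precise about two things: that the operator denoted $\mathcal{R}^\top$ in Algorithm \ref{alg:ml-fw} is genuinely the Hilbert-space adjoint of $\mathcal{R}$, so that the transfer identity $\langle\mathbf{y},\mathcal{R}^\top(\mathbf{z}_H)\rangle=\langle\mathcal{R}(\mathbf{y}),\mathbf{z}_H\rangle$ is available; and that $\mathbf{x}_H^\star\in\mathcal{D}_H$ is exactly the hypothesis needed for the coarse oracle to dominate (so the statement should be read with $\mathbf{x}_H^\star\in\mathcal{D}_H$, not merely $\mathcal{H}_H$). A secondary, purely technical point is the $k=0$ base case, where $2LD^2/(k+2)=LD^2$ need not bound $f(\mathbf{x}^{(0)})-f(\mathbf{x}^\star)$ for an arbitrary initializer; this is handled either by starting the estimate at $k=1$ or, as above, by invoking the $\gamma=1$ first step. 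Everything else is standard Frank--Wolfe bookkeeping and transfers essentially verbatim from \cite{mu2016scalable}.
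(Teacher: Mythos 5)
Your proposal is correct and follows essentially the same route as the paper's proof: the descent lemma, the adjoint identity $\langle\nabla f(\mathbf{x}^{(k)}),\mathcal{R}^\top(\mathbf{z}_H)\rangle=\langle\mathcal{R}(\nabla f(\mathbf{x}^{(k)})),\mathbf{z}_H\rangle$ combined with the coarse-oracle optimality against $\mathbf{x}_H^\star\in\mathcal{D}_H$, convexity, and the standard induction on the recursion $h_{k+1}\le(1-\gamma)h_k+\gamma^2LD^2/2$ (the paper merely adds and subtracts $\mathcal{R}^\top(\mathbf{x}_H^{(k)})$ before telescoping, which is an equivalent rearrangement of your direct bound on $\langle\nabla f(\mathbf{x}^{(k)}),\mathbf{v}^{(k)}\rangle$). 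Your side remarks --- that the hypothesis must be $\mathbf{x}_H^\star\in\mathcal{D}_H$ rather than $\mathcal{H}_H$, and that the bound is really established for $k\ge 1$ --- accurately identify minor imprecisions in the theorem's statement rather than gaps in the argument.
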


\begin{proof}
For $k=0,1,2,\dots$ we have

\begin{equation}
\begin{split}
    f(\mathbf{x}^{(k+1)}) & \leq f(\mathbf{x}^{(k)}+\gamma(\mathbf{v}^{(k)}-\mathbf{x}^{(k)})) \\
     & \leq f(\mathbf{x}^{(k)})+\gamma\langle\nabla f(\mathbf{x}^{(k)}),\mathbf{v}^{(k)}-\mathbf{x}^{(k)}\rangle + \frac{L\gamma^2}{2}\Vert\mathbf{v}^{(k)}-\mathbf{x}^{(k)}\Vert^2 \\
     & \leq f(\mathbf{x}^{(k)})+\gamma\langle\nabla f(\mathbf{x}^{(k)}),\mathcal{R}^\top(\mathbf{v}_H^{(k)})-\mathcal{R}^\top(\mathbf{x}_H^{(k)})\rangle + \gamma\langle\nabla f(\mathbf{x}^{(k)}),\mathcal{R}^\top(\mathbf{x}_H^{(k)})-\mathbf{x}^{(k)}\rangle\\
     & + \frac{\gamma^2LD^2}{2}\\
     & = f(\mathbf{x}^{(k)})+\gamma\langle\mathcal{R}(\nabla f(\mathbf{x}^{(k)})),\mathbf{v}_H^{(k)}-\mathbf{x}_H^{(k)}\rangle + \gamma\langle\nabla f(\mathbf{x}^{(k)}),\mathcal{R}^\top(\mathbf{x}_H^{(k)})-\mathbf{x}^{(k)}\rangle + \frac{\gamma^2LD^2}{2} \\
     & \leq f(\mathbf{x}^{(k)})+\gamma\langle\mathcal{R}(\nabla f(\mathbf{x}^{(k)})),\mathbf{x}_H^\star-\mathbf{x}_H^{(k)}\rangle + \gamma\langle\nabla f(\mathbf{x}^{(k)}),\mathcal{R}^\top(\mathbf{x}_H^{(k)})-\mathbf{x}^{(k)}\rangle + \frac{\gamma^2LD^2}{2} \\
     & = f(\mathbf{x}^{(k)})+\gamma\langle\nabla f(\mathbf{x}^{(k)}),\mathcal{R}^\top(\mathbf{x}_H^\star)-\mathbf{x}^{(k)}\rangle + \frac{\gamma^2LD^2}{2} \\
     & \leq f(\mathbf{x}^{(k)})+\gamma(f(\mathcal{R}^\top(\mathbf{x}_H^\star))-f(\mathbf{x}^{(k)})) + \frac{\gamma^2LD^2}{2}.
\end{split}
\end{equation}
Here for the first line we used the updating rule in Algorithm \ref{alg:ml-fw}; for the second line we used the Lipschitz continuity of $f$; for the third line - the definitions of $\mathbf{v}_H^{(k)}$ and $D$, and we added and subtracted $\mathcal{R}^\top (\mathbf{x}_H^{(k)})$; for the fourth line - the property of inner product; for the fifth line - the optimality of $\mathbf{v}_H^{(k)}$; for the sixth line - the property of inner product and the definition of $\mathbf{x}^{(k)}$; and for the last line - the convexity of $f$. Then rearranging the terms we get
\begin{equation} \label{eq:ml-fw-thm-eq1}
    f(\mathbf{x}^{(k+1)})-f(\mathcal{R}^\top(\mathbf{x}_H^\star)) \leq (1-\gamma)(f(\mathbf{x}^{(k)})-f(\mathcal{R}^\top(\mathbf{x}_H^\star)))+\frac{\gamma^2 LD^2}{2}.
\end{equation}
Therefore, by mathematical induction, it can be verified that
\begin{equation} \label{eq:ml-fw-rate}
\begin{array}{ccc}
    f(\mathbf{x}^{(k)})-f(\mathcal{R}^\top(\mathbf{x}_H^\star)) \leq \frac{2LD^2}{k+2} & \text{for} & k=1,2,3,\dots.
\end{array}
\end{equation}

\end{proof}

Theorem \ref{th:mlfw-convergence} tells us, that if the minimiser $\mathbf{x}^\star$ can be accurately represented in terms of a coarse variable, then ML-FWT is a good and efficient method for that particular problem. As we will see in the next subsection, this is indeed the case for the CPCP model.

\subsection{Multilevel Frank-Wolfe Thresholding for CPCP} \label{sec:ml-fwt}

In this subsection we will modify the Multilevel Frank-Wolfe method similarly to the Frank-Wolfe Thresholding method introducing the Multilevel Frank-Wolfe Thresholding method and apply it for the CPCP problem (\ref{eq:cpcp}). In this case as well we will apply the multilevel update only on nuclear ball projections. 

We begin with defining the fine and coarse spaces, and the restriction operator for the CPCP problem. In this setting our variables becomes $\mathbf{x}=(\mathbf{L},\mathbf{S},t_L,t_S)$ and the space is $\mathcal{H}=\mathbb{R}^{m\times n}\times\mathbb{R}^{m\times n}\times \mathbb{R}\times \mathbb{R}$. 

Since we are applying the multilevel steps only for updating $\mathbf{L}^{(k)}$, we will use the following restriction operator:
\begin{equation} \label{eq:R-fw}
    \mathbf{R}=\begin{pmatrix}
    \mathbf{R}_x &  &  &  & \\
                 & \mathbf{I}_{n\times n} & & \\
                 &                        & 1 & \\
                 &                        &   & 1
    \end{pmatrix},
\end{equation}
so that 
\begin{equation}
\begin{split}
    & \mathcal{R}(\mathbf{L},\mathbf{S},\lambda_L,\lambda_S) = (\mathbf{L}\mathbf{R}_x,\mathbf{S},\lambda_L,\lambda_S) \\
    & \mathcal{R}^\top(\nabla_\mathbf{L} f, \nabla_\mathbf{S} f, \nabla_{\lambda_L} f, \nabla_{\lambda_S} f) = (\nabla_\mathbf{L} f \mathbf{R}_x, \nabla_\mathbf{S} f, \nabla_{\lambda_L} f, \nabla_{\lambda_S} f),
\end{split}
\end{equation}
and thus $\mathcal{R}(\nabla f(\mathbf{x}))$ only affects $\nabla_\mathbf{L} f(\mathbf{L},\mathbf{S},t_L ,t_S )$ and correspondingly, $\mathcal{R}^\top(\mathbf{v}_H)$ only affects $\nabla f_{\mathbf{L}}$. Here $\mathbf{R}_x = \mathbf{R}_n\cdot\ldots\cdot\mathbf{R}_{n_H}$ is the restriction operator as defined in Section \ref{sec:coarse}. Therefore, the coarse space becomes $\mathcal{H}_H=\mathbb{R}^{m\times n_H}\times \mathbb{R}^{m\times n}\times \mathbb{R}\times \mathbb{R}$. Now we can define the coarse feasibility set $\mathcal{D}_H$ for the CPCP problem as follows:
\begin{equation}
    \Vert\mathbf{M}_{L,H}\Vert_* \leq 1/\Vert\mathcal{R}\Vert_*,
\end{equation}
so that for each $k=0,1,\dots$
\begin{equation}
    \Vert\mathbf{M}_L^{(k)}\Vert_* = \Vert\mathcal{R}^T(\mathbf{M}_{L,H}^{(k)})\Vert_* \leq \Vert\mathcal{R}\Vert_*\Vert\mathbf{M}_{L,H}^{(k)}\Vert_* \leq 1,
\end{equation}
is a feasible point of the fine problem, where $\mathbf{M}_{L,H}$ is the coarse variable. We call the new algorithm Multilevel Frank-Wolfe Thresholding (ML-FWT) (Algorithm (\ref{alg:ml-fwt-cpcp})).

\begin{algorithm}
  \caption{Multilevel Frank-Wolfe Thresholing (ML-FWT) \label{alg:ml-fwt-cpcp}}
  \begin{algorithmic}[1]
    \Require{$\mathbf{D}\in\mathbb{R}^{m\times n}; \lambda_L, \lambda_S > 0$}
    \State Initialize as in Algorithm \ref{alg:fw-t}
    \For{$k \gets 1 \textrm{ to } ...$}
      \State $\mathbf{M}_{L,H}^{(k)}\in\argmin\limits_{\Vert\mathbf{M}_{L,H}\Vert_*\leq 1/\Vert\mathcal{R}\Vert_*} \langle \mathcal{R}(\mathcal{P}_{\mathcal{Q}}[\mathbf{L}^{(k)}+\mathbf{S}^{(k)}-\mathbf{D}]),\mathbf{M}_{L,H}\rangle$ \\
      \State \Let{$\mathbf{M}_L^{(k)}$}{$\mathcal{R}^\top(\mathbf{M}_{L,H}^{(k)})$}
      \State // Continue as in Algorithm \ref{alg:fw-t}
    \EndFor \\
    \Return{$(\mathbf{L}^{(k+1)}, \mathbf{S}^{(k+1)})$}
  \end{algorithmic}
\end{algorithm}

Thus the multilevel update step for $\mathbf{M}_L^{(k)}$ requires calculating only the largest singular value with the corresponding singular vectors for much lower dimensional matrices. The next theorem gives convergence guarantees for the ML-FWT method.

\begin{theorem}
Let $f(\mathbf{L},\mathbf{S},t_L,t_S)$ be defined as in (\ref{eq:cpcp-fw}) and $\rank(\mathbf{L}^\star)\leq n_H$. Then under Assumptions \ref{ass:R} and \ref{ass:LH}, for $\mathbf{x}^{(k)}$, $k=1,2,\dots$ defined as in Algorithm \ref{alg:ml-fwt-cpcp} the following holds
\begin{equation}
    f(\mathbf{x}^{(k)}) - f(\mathbf{x}^\star) \leq \frac{2LD^2}{k+2}.
\end{equation}
\end{theorem}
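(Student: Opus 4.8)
The plan is to reduce this statement to Theorem \ref{th:mlfw-convergence}, which already gives the $\mathcal{O}(1/k)$ convergence of the Multilevel Frank–Wolfe iterates to $f(\mathcal{R}^\top(\mathbf{x}_H^\star))$ for any coarse point $\mathbf{x}_H^\star \in \mathcal{D}_H$. The only gap to close is to show that the \emph{true} optimal point $\mathbf{x}^\star = (\mathbf{L}^\star, \mathbf{S}^\star, t_L^\star, t_S^\star)$ of problem (\ref{eq:cpcp-fw}) can actually be written as $\mathbf{x}^\star = \mathcal{R}^\top(\mathbf{x}_H^\star)$ for some feasible coarse point $\mathbf{x}_H^\star \in \mathcal{D}_H$; once we have that, we simply instantiate Theorem \ref{th:mlfw-convergence} at that particular $\mathbf{x}_H^\star$, and $f(\mathcal{R}^\top(\mathbf{x}_H^\star)) = f(\mathbf{x}^\star)$, which is exactly the claimed bound.

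First I would use Assumption \ref{ass:LH}: since $\rank(\mathbf{L}^\star) \le n_H$, the low-rank component decomposes as $\mathbf{L}^\star = \mathbf{L}_H^\star \mathbf{R}_x^\top$ for some $\mathbf{L}_H^\star \in \mathbb{R}^{m \times n_H}$. Because the restriction operator in (\ref{eq:R-fw}) acts as the identity on the $\mathbf{S}$, $t_L$, $t_S$ blocks and as $\mathbf{R}_x$ only on the $\mathbf{L}$ block, I set $\mathbf{x}_H^\star := (\mathbf{L}_H^\star, \mathbf{S}^\star, t_L^\star, t_S^\star)$, so that $\mathcal{R}^\top(\mathbf{x}_H^\star) = (\mathbf{L}_H^\star \mathbf{R}_x^\top, \mathbf{S}^\star, t_L^\star, t_S^\star) = \mathbf{x}^\star$ by construction. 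The second step is to verify that this $\mathbf{x}_H^\star$ is in $\mathcal{D}_H$, i.e.\ that the coarse constraints hold: here I would invoke Theorem \ref{th:LH-eps} (together with Assumption \ref{ass:R}) to relate $\Vert\mathbf{L}_H^\star\Vert_*$ to $\Vert\mathbf{L}_H^\star \mathbf{R}_x^\top\Vert_* = \Vert\mathbf{L}^\star\Vert_* \le t_L^\star$, and use the normalisation built into $\mathcal{D}_H$ (the scaling by $1/\Vert\mathcal{R}\Vert_*$) to conclude that $\mathbf{x}_H^\star$ satisfies the coarse feasibility constraint and hence $\mathcal{R}^\top(\mathbf{x}_H^\star) \in \mathcal{D}$. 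The remaining data — that $f$ has Lipschitz gradient with constant $L = 2$ and that the (tightened) feasible set has bounded diameter $D$ — are already recorded in the discussion around (\ref{eq:cpcp-epi})–(\ref{eq:cpcp-fw}).

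Finally, I would apply Theorem \ref{th:mlfw-convergence} with this $\mathbf{x}_H^\star$, noting that the FW-Thresholding update used in Algorithm \ref{alg:ml-fwt-cpcp} only decreases the objective relative to the plain FW update (the thresholding/proximal step is a descent step), so the iterates still satisfy the sufficient-decrease inequality (\ref{eq:ml-fw-thm-eq1}) that drives the induction; hence the $\mathcal{O}(1/k)$ bound is preserved. The main obstacle I anticipate is the feasibility bookkeeping in the second step: one must be careful that the coarse nuclear-norm ball, rescaled by $1/\Vert\mathcal{R}\Vert_*$, together with the time-varying upper bounds $U_L^{(k)}, U_S^{(k)}$, really does contain the (suitably coarsened) optimum at every iteration, and that applying $\mathcal{R}^\top$ maps it back inside the fine feasible set — this is where Assumptions \ref{ass:R} and \ref{ass:LH} and the singular-value inequality of Theorem \ref{th:LH-eps} are all genuinely needed, and a sloppy constant there would break the clean $2LD^2/(k+2)$ statement.
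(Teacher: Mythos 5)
Your proposal follows essentially the same route as the paper: invoke Assumption \ref{ass:LH} to write $\mathbf{L}^\star = \mathbf{L}_H^\star\mathbf{R}^\top$, set $\mathbf{x}_H^\star=(\mathbf{L}_H^\star,\mathbf{S}^\star,t_L^\star,t_S^\star)$ so that $f(\mathcal{R}^\top(\mathbf{x}_H^\star))=f(\mathbf{x}^\star)$, and then apply Theorem \ref{th:mlfw-convergence}. You are in fact slightly more careful than the paper, which silently skips the verification that $\mathbf{x}_H^\star$ lies in the coarse feasible set $\mathcal{D}_H$ — a hypothesis Theorem \ref{th:mlfw-convergence} formally requires — whereas you correctly flag this as the step needing Theorem \ref{th:LH-eps} and the $1/\Vert\mathcal{R}\Vert_*$ normalisation.
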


\begin{proof}
From assumption \ref{ass:LH} we derive
\begin{equation}
\begin{split}
    f(\mathcal{R}^\top(\mathbf{L}_H^\star,\mathbf{S}^\star,t_L^\star,t_S^\star)) = & \frac{1}{2}\Vert \mathcal{P}_\mathcal{Q}[\mathbf{D}-\mathbf{L}_H^\star\mathbf{R}^\top -\mathbf{S}^\star]\Vert_F^2 + \lambda_L t_L^\star + \lambda_S t_S^\star \\
    = & \frac{1}{2}\Vert \mathcal{P}_\mathcal{Q}[\mathbf{D}-\mathbf{L}^\star -\mathbf{S}^\star]\Vert_F^2 + \lambda_L t_L^\star + \lambda_S t_S^\star \\
    = & f(\mathbf{L}^\star,\mathbf{S}^\star,t_L^\star,t_S^\star).
\end{split}
\end{equation}

Therefore, the claim follows from Theorem \ref{th:mlfw-convergence}.

\end{proof}

Finally, note that since FWT and ML-FWT have the same convergence rate and multiplying by the sparse matrix $\mathbf{R}$ (and its powers) is much cheaper than computing one singular value (although both have the same $\mathcal{O}(mn)$ worst case complexity), ML-FWT has a lower overall complexity.

\section{Experiments} \label{sec:experiments}

To test the practical efficiency of the proposed methods we compare them with the standard Inexact ALM \cite{lin2010augmented} and Frank-Wolfe Thresholding \cite{netrapalli2014non} algorithms on several synthetically generated problems, as well as real life video background extraction and facial shadow removal problems. For the standard Inexact ALM and Frank-Wolfe Thresholding algorithms we used the provided Matlab code. Then for each multilevel variant we replaced the standard singular value thresholding parts of respective algorithms with corresponding multilevel singular value thresholding code, keeping the rest of the algorithms unchanged. Particularly, we used the same optimality criteria, so that the comparisons are fair. All methods were tested in Matlab R2015a on a standard Ubuntu 16.4 machine with Intel Core i7 processor and 32GB RAM. The code is available online at \href{https://github.com/vahanhov/ml-rpca}{https://github.com/vahanhov/ml-rpca}.

\subsection{Synthetic Data} \label{sec:synthetic}

First we test the multilevel algorithms on synthetically generated data matrix $\mathbf{D}=\hat{\mathbf{L}}+\hat{\mathbf{S}}\in\mathbb{R}^{m\times n}$, where $\hat{\mathbf{L}}$ has a fixed low rank $r$ and $\hat{\mathbf{S}}$ is $\eta$-sparse (i.e. has at most $\eta\cdot m n$ non-zero entries). We generate the synthetic data so that the singular values of the low rank component follow $1/k^2$, where $k$ indicates the $k$-th largest singular value. 

We run two sets of experiments. The first one compares the results achieved after running IALM and ML-IALM on RPCA problems for a fixed time. We run two pairs of experiments: with smaller and larger data, each with lower and higher rank of the low-rank component. The experiments are described in Table \ref{t:exp-synth-ialm}, with each row corresponding to one experimental setting. The first three columns describe the particular setting and the number of seconds dedicated to solve the problem. Then each triplet of columns gives the results achieved by IALM and ML-IALM algorithms correspondingly. The reported results are $f^\star$ - achieved objective value, error$(\mathbf{S}^\star)$ and error$(\mathbf{L}^\star)$ - achieved relative errors from corresponding ground truths. It is evident that ML-IALM accurately solves all four problems, while both objective values and  relative errors from IALM are several times larger than those of ML-IALM. This effectively means that ML-IALM can solve large problems in reasonable time that may require impractically long times for IALM.

Then we synthetically generate similar data, but this time with partial observations. This setting is modelled as a CPCP problem and is then solved using FWT and ML-FWT methods. The experimental settings are given in Table \ref{t:exp-synth-fwt}. Here as well we have two pairs of problems: larger and smaller with larger and smaller ranks of the low-rank component. Here we run both problems until $10^{-3}$ tolerance as suggested in \cite{mu2016scalable}. Here both algorithms achieve relatively small objective values and relative errors from the ground truth, with ML-FWT being slightly better, however ML-FWT takes significantly less time to do so. In fact, it is more than twice faster for the smaller rank settings.

\begin{table}
\center
\begin{tabular}{|c|c|c|c|c|c|c|c|c|}
	\hline 
    \multicolumn{3}{|c|}{problem} & \multicolumn{3}{|c|}{IALM} & \multicolumn{3}{|c|}{ML-IALM} \\
	\hline 
	dimensions & rank & sec & $f^\star$ & error$(\mathbf{L}^\star)$ & error$(\mathbf{S}^\star)$ & $f^\star$ & error$(\mathbf{L}^\star)$ & error$(\mathbf{S}^\star)$ \\
	\hline 
	$5000\times 100$ & $2$ & $5$ & $19$ & $7$ & $0.1$ & $10$ & $1$ & $0.02$ \\
	\hline 
	$5000\times 100$ & $5$ & $5$ & $18$ & $6$ & $0.1$ & $7.5$ & $1$ & $0.02$ \\
	\hline 
	$5000\times 1000$ & $2$ & $10$ & $64$ & $42$ & $0.8$ & $7$ & $1$ & $0.01$ \\
	\hline 
	$5000\times 1000$ & $5$ & $10$ & $64$ & $43$ & $0.8$ & $8$ & $1$ & $0.01$ \\
	\hline 
\end{tabular} 
\caption{Achieved objective function values and relative errors from ground truth after running IALM and ML-IALM on RPCA problems with synthetic data for a fixed time.}
\label{t:exp-synth-ialm}
\end{table}

\begin{table}
\center
\begin{tabular}{|c|c|c|c|c|c|c|c|c|c|}
	\hline 
    \multicolumn{2}{|c|}{problem} & \multicolumn{4}{|c|}{FWT} & \multicolumn{4}{|c|}{ML-FWT} \\
	\hline 
	dimensions & rank & sec & $f^\star$ & err$(\mathbf{L}^\star)$ & err$(\mathbf{S}^\star)$ & sec & $f^\star$ & err$(\mathbf{L}^\star)$ & err$(\mathbf{S}^\star)$ \\
	\hline 
	$10000\times 500$ & $2$ & $14.5$ & $8\cdot 10^{-4}$ & $1.3$ & $88$ & $9$ & $2\cdot 10^{-4}$ & $1.1$ & $89.4$ \\
	\hline 
	$10000\times 5000$ & $2$ & $209$ & $0.02$ & $1.3$ & $125$ & $91$ & $7\cdot 10^{-4}$ & $1$ & $125$ \\
	\hline 
	$10000\times 500$ & $5$ & $14.4$ & $8\cdot 10^{-4}$ & $1.3$ & $89$ & $12.5$ & $2\cdot 10^{-4}$ & $1.1$ & $89$ \\
	\hline 
	$10000\times 5000$ & $5$ & $203$ & $0.002$ & $1.3$ & $125$ & $90$ & $7\cdot 10^{-4}$ & $1.1$ & $125$ \\
	\hline 
\end{tabular} 
\caption{CPU times (in seconds), achieved objective function values and relative errors from ground truth after running FWT and ML-FWT on CPCP problems with synthetic data until $10^{-3}$ convergence error.}
\label{t:exp-synth-fwt}
\end{table}

\subsection{Video Background Extraction} \label{sec:video}
Now we test the algorithms on real surveillance videos. Assume we are given a surveillance video from a fixed camera and the task is to separate the constant background from moving objects. This problem can be modelled as a RPCA problem \cite{bouwmans2016decomposition}. We first stack each frame of the video as a column vector creating a data matrix $\mathbf{D}$. Then, since the fixed background remains (approximately) constant in each frame and the moving objects take a relatively small portion of each frame, they can respectively represent the low rank and sparse components of the RPCA decomposition. We tested all algorithms on several surveillance videos described below.

\begin{itemize}
    \item \textbf{highway}: $ 48\times 64\times 400$; run $5$ seconds
    \item \textbf{copy machine}: $ 48\times 72\times 3400$; run $50$ seconds  \footnote{\url{http://wordpress-jodoin.dmi.usherb.ca/dataset2012/}}
    \item \textbf{walk:} $ 240\times 320\times 794$; run $30$ seconds \cite{vacavant2012benchmark}
    \item \textbf{gates:} $ 240\times 320\times 1895$; run $200$ seconds
    \cite{vacavant2012benchmark}
\end{itemize}

First we test the IALM and ML-IALM methods. Here we run both methods for a fixed amount of time until a reasonably small error from ground truth has been achieved. The running times for each problem are indicated above. We then compare the results, which are reported in Figure \ref{fig:exp-video-ialm}. Each row represents a tested video. The first column contains sample frames from each corresponding video, then each of the following column triplets contains corresponding low rank and sparse components as returned from IALM and ML-IALM algorithms. Below each frame we also report the corresponding achieved rank and the feasibility gap (FG) i.e. $\Vert \mathbf{D}-\mathbf{L}^{\star}-\mathbf{S}^{\star}\Vert_F / \Vert \mathbf{D}\Vert_F$.

As the results indicate, both algorithms produce similar results for all videos, except the larger \textbf{copymachine} and \textbf{gates} examples, for which ML-IALM produces significantly clearer separation of background than IALM.

\begin{figure*}
\center
\setlength\tabcolsep{6pt}
\begin{tabular}{|c|c|c|c|c|c|}\hline
  \multirow{2}{*}{Original} & \multicolumn{2}{|c|}{Low Rank} & \multicolumn{2}{|c|}{Sparse} \\ 
  \cline{2-5}
   & IALM & ML-IALM & IALM & ML-IALM \\ \hline
  \includegraphics[scale=0.12]{img/highway_orig.eps} & \includegraphics[scale=0.12]{img/highway_lowrank_IALM.eps} & \includegraphics[scale=0.12]{img/highway_lowrank_MlIALM.eps} & \includegraphics[scale=0.12]{img/highway_sparse_IALM.eps} & \includegraphics[scale=0.12]{img/highway_sparse_MlIALM.eps} \\ \hline
  \textbf{highway} & rank =$5$ & rank=$3$ & FG = $0.0176$ & FG=$0.01$ \\ \hline
  \includegraphics[scale=0.12]{img/copymachine_orig.eps} & \includegraphics[scale=0.12]{img/copymachine_lowrank_IALM.eps} & \includegraphics[scale=0.12]{img/copymachine_lowrank_MlIALM.eps} & \includegraphics[scale=0.12]{img/copymachine_sparse_IALM.eps} & \includegraphics[scale=0.12]{img/copymachine_sparse_MlIALM.eps} \\ \hline
  \textbf{copy machine} & rank = $7$ & rank=$4$ & FG = $0.0364$ & FG=$0.0031$ \\ \hline
  \includegraphics[scale=0.12]{img/walk_orig.eps} & \includegraphics[scale=0.12]{img/walk_lowrank_IALM.eps} & \includegraphics[scale=0.12]{img/walk_lowrank_MlIALM.eps} & \includegraphics[scale=0.12]{img/walk_sparse_IALM.eps} & \includegraphics[scale=0.12]{img/walk_sparse_MlIALM.eps} \\ \hline
  \textbf{walk} & rank = $2$ & rank=$1$ & FG = $0.02$ & FG=$0.0231$ \\ \hline
  \includegraphics[scale=0.12]{img/gates_orig.eps} & \includegraphics[scale=0.12]{img/gates_lowrank_IALM.eps} & \includegraphics[scale=0.12]{img/gates_lowrank_MlIALM.eps} & \includegraphics[scale=0.12]{img/gates_sparse_IALM.eps} & \includegraphics[scale=0.12]{img/gates_sparse_MlIALM.eps} \\ \hline
  \textbf{gates} & rank = $3$ & rank=$3$ & FG = $0.05$ & FG=$0.04$ \\ \hline
  \end{tabular}
\caption{Examples from solving video background extraction problems via IALM and ML-IALM methods. Both IALM and ML-IALM run for a fixed CPU seconds. Each row corresponds respectively to \textbf{highway} ($48 \times 64\times 400$), \textbf{copy machine} ($ 48\times 72\times 3,400$), \textbf{walk} ($ 240\times 320\times 794$) and \textbf{gates} ($ 240\times 320\times 1,895$) videos from top to bottom. With each frame we also report the respective rank of the low rank component and the feasibility gap (FG): $\Vert \mathbf{D}-\mathbf{L}-\mathbf{S}\Vert_F / \Vert\mathbf{D}\Vert_F$.}
\label{fig:exp-video-ialm}
\end{figure*}

In order to further investigate the convergence properties of the ML-IALM algorithm compared to the standard IALM, we measure the relative error of the current iterates compared to the ground truth $(\mathbf{L}_0, \mathbf{S}_0)$ and FGs during the iterations of both algorithms through the same time interval. We report those relative errors against CPU time (seconds) and iteration numbers in Figure \ref{fig:exp-video-plots}. 
The plots suggest that ML-IALM performs only slightly faster than IALM on the smaller \textbf{highway} example, however, as expected it is significantly faster on the larger \textbf{copy machine} problem. As we could anticipate from the theory, at each iteration ML-IALM achieves a very good approximation as measured by the reconstruction error, and since its iterations are significantly cheaper, it performs more iterations during the same time interval than IALM.

\begin{figure*}
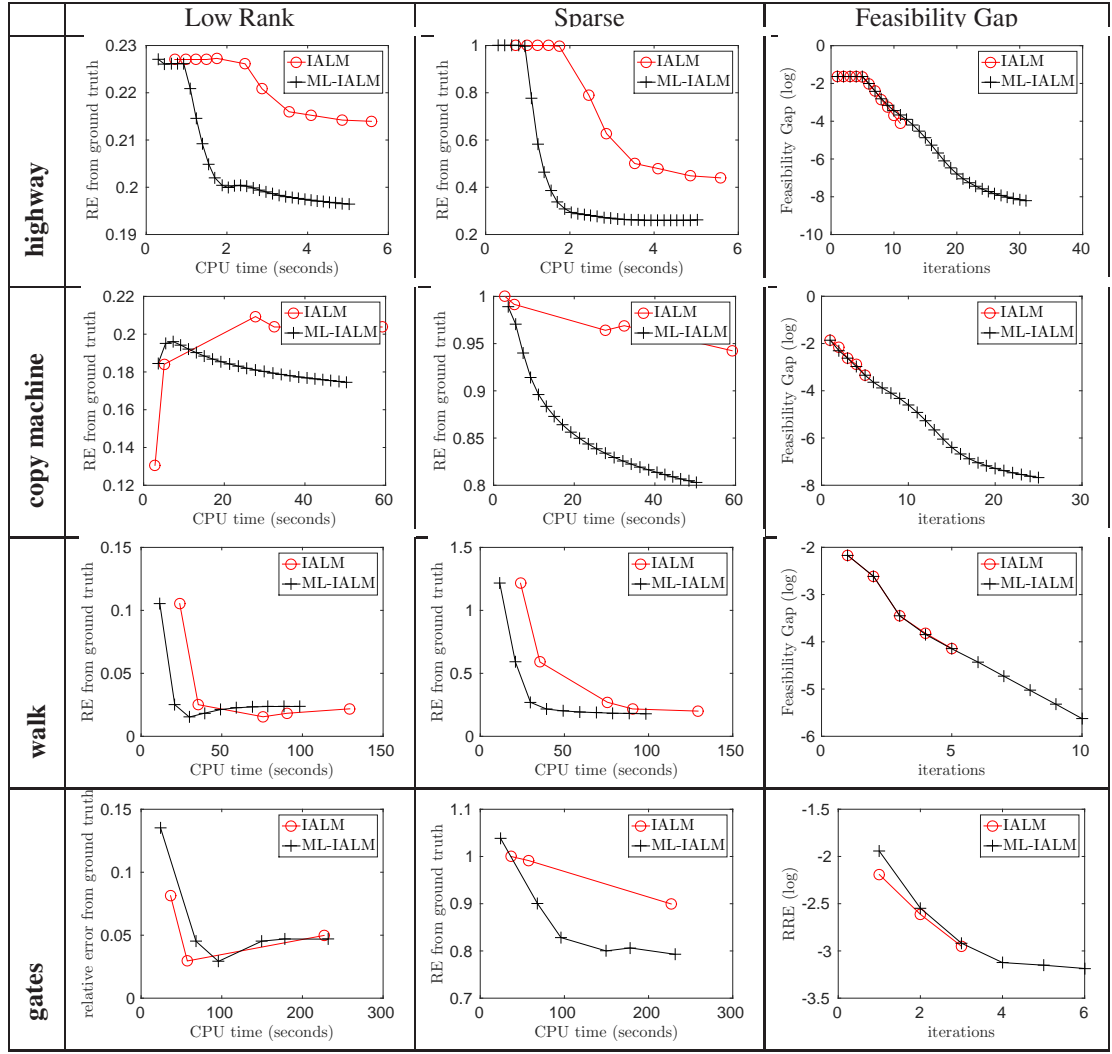

\centering
\begin{tabular}{|c|c|c|c|}\hline
   & Low Rank & Sparse & Feasibility Gap \\ \hline
  \rot{\rlap{\textbf{~~~highway}}} & \includegraphics[scale=0.22]{img/highway_errL.eps} & \includegraphics[scale=0.22]{img/highway_errS.eps} & \includegraphics[scale=0.22]{img/highway_frob_err.eps} \\ \hline
  \rot{\textbf{~~~copy machine}} & \includegraphics[scale=0.22]{img/copymachine_errL.eps} & \includegraphics[scale=0.22]{img/copymachine_errS.eps} & \includegraphics[scale=0.22]{img/copymachine_frob_err.eps} \\ \hline
  \rot{\textbf{~~~walk}} & \includegraphics[scale=0.22]{img/walk_errL.eps} & \includegraphics[scale=0.22]{img/walk_errS.eps} & \includegraphics[scale=0.22]{img/walk_frob_err.eps} \\ \hline
  \rot{\textbf{~~~gates}} & \includegraphics[scale=0.22]{img/gates_errL.eps} & \includegraphics[scale=0.22]{img/gates_errS.eps} & \includegraphics[scale=0.22]{img/gates_frob_err.eps}\\ \hline
\end{tabular}
\caption{Comparing the relative errors during IALM and ML-IALM iterations. The first two columns give relative errors (RE) compared to the ground truth $(\mathbf{L}_0, \mathbf{S}_0)$, and the third column gives feasibility gaps (FG) during iterations. Each row corresponds respectively to \textbf{highway} ($48 \times 64\times 400$), \textbf{copy machine} ($ 48\times 72\times 3,400$), \textbf{walk} ($ 240\times 320\times 794$) and \textbf{gates} ($ 240\times 320\times 1,895$) videos from top to bottom.}
\label{fig:exp-video-plots}
\end{figure*}

In all experiments we used $4$ levels of coarse models for all four problems. In this case as well, the multilevel variant largely outperforms the original algorithm. In fact, the larger the original problem, the bigger relative speed up can be achieved using the multilevel approach, since for larger $n$ we can use deeper levels.

Next we test the performance of our ML-FWT algorithm against the standard FWT. In this case we will also add $75\%$ random noise to the original video. Here we run both algorithms until convergence with $10^{-3}$ accuracy as suggested in \cite{mu2016scalable}. Consequently, here we will report the running times and the achieved results (objective value, rank of the low-rank component and sparsity of the sparse component) of each algorithm in Table \ref{t:fwt}. As the numbers indicate both algorithms achieve very similar objective values and sparsity of the sparse component. However that ranks of the low rank components is better for ML-FWT. In fact for the largest problems FWT returns values with very large ranks and thus fails to solve the problem, while ML-FWT performs equally well on all problems. Furthermore, ML-FWT is much faster, especially on larger problems.

\begin{table}
\center
\begin{tabular}{|c|c|c|c|c|c|c|c|c|}
	\hline 
    \multirow{2}{*}{problem (dimensions)} & \multicolumn{4}{|c|}{FWT} & \multicolumn{4}{|c|}{ML-FWT} \\
	\cline{2-9}
	& sec & $f^\star$ & $\rank(\mathbf{L}^\star)$ & $\sparsity(\mathbf{S}^\star)$ & sec & $f^\star$ & $\rank(\mathbf{L}^\star)$ & $\sparsity(\mathbf{S}^\star)$ \\
	\hline 
	highway ($3072\times 400$) & $14$ & $0.001$ & $39$ & $0.22$ & $10$ & $0.001$ & $36$ & $0.22$ \\
	\hline 
	hall ($25344\times 200$) & $50$ & $0.001$ & $38$ & $0.47$ & $37$ & $0.001$ & $12$ & $0.47$ \\
	\hline 
	copym. ($3456\times 3400$) & $373$ & $0.001$ & $104$ & $0.11$ & $160$ & $0.001$ & $26$ & $0.17$ \\
	\hline 
	mall ($81920\times 300$)& $337$ & $0.001$ & $32$ & $0.42$ & $195$ & $0.001$ & $9$ & $0.43$ \\
	\hline 
	lobby ($20480\times 1000$)& 487 & $0.001$ & $111$ & $0.05$ & 385 & $0.001$ & $31$ & $0.06$ \\
	\hline 
\end{tabular} 
\caption{CPU time (in seconds), achieved objective value, rank and sparsity  after solving the resulting PCP problem for noisy video background extraction up to tolerance $10^{-3}$ using the standard Frank-Wolfe Thresholding (FWT) and its multilevel variant ML-FWT.}
\label{t:fwt}
\end{table}

\subsection{Shadow removal from facial images} \label{sec:shadow}

Here we have a set of facial images from one individual under various illuminations and the task is to remove shadow/light noises from images. This problem can also be modelled as RPCA by stacking the facial images as column vectors and then putting them together to form the data matrix. Then since aligned frontal facial images span a low dimensional subspace, we can represent the clear images as the low-rank component of the data matrix and the shadow will become the sparse component.

We used images of individuals from the Yale B facial extended database \cite{GeBeKr01}. It contains ($ 96\times 84$) dimensional facial images of $39$ subjects taken under various poses and illuminations each, with total $2,414$ images. For this setting as well we ran the IALM and ML-IALM algorithms for a fixed $5$ second and compare the returned results, which are reported in Figure \ref{fig:exp-face-ialm}. Here as well each row represents a particular problem setting (individual). The first column contains sample frames from each corresponding facial database, then each of the following four columns contains correspondingly low rank and sparse components as returned from IALM, and ML-IALM algorithms. With each image we also report the corresponding achieved rank of the low rank component and the feasibility gap.

\begin{figure*}
\center
\setlength\tabcolsep{6pt}
\begin{tabular}{|c|c|c|c|c|}\hline
  \multirow{2}{*}{Original} & \multicolumn{2}{|c|}{Low Rank} & \multicolumn{2}{|c|}{Sparse} \\ \cline{2-5}
   & IALM & ML-IALM & IALM & ML-IALM \\ \hline
  \includegraphics[scale=0.15]{img/yaleB01_orig.eps} & \includegraphics[scale=0.15]{img/yaleB01_lowrank_IALM.eps} & \includegraphics[scale=0.15]{img/yaleB01_lowrank_MlIALM.eps} & \includegraphics[scale=0.15]{img/yaleB01_sparse_IALM.eps} & \includegraphics[scale=0.15]{img/yaleB01_sparse_MlIALM.eps} \\ \hline
  \textbf{Yale B01} & rank = $5$ & $10$ & FG = $0.24$ & $0.05$ \\ \hline
  \includegraphics[scale=0.15]{img/yaleB02_orig.eps} & \includegraphics[scale=0.15]{img/yaleB02_lowrank_IALM.eps} & \includegraphics[scale=0.15]{img/yaleB02_lowrank_MlIALM.eps} & \includegraphics[scale=0.15]{img/yaleB02_sparse_IALM.eps} & \includegraphics[scale=0.15]{img/yaleB02_sparse_MlIALM.eps} \\ \hline
  \textbf{Yale B02} & rank = $5$ & $10$ & FG = $0.24$ & $0.05$ \\ \hline
  \includegraphics[scale=0.15]{img/yaleB10_orig.eps} & \includegraphics[scale=0.15]{img/yaleB10_lowrank_IALM.eps} & \includegraphics[scale=0.15]{img/yaleB10_lowrank_MlIALM.eps} & \includegraphics[scale=0.15]{img/yaleB10_sparse_IALM.eps} & \includegraphics[scale=0.15]{img/yaleB10_sparse_MlIALM.eps} \\ \hline
  \textbf{Yale B10} & rank = $3$ & $10$ & FG = $0.24$ & $0.05$ \\ \hline
  \end{tabular}
\caption{Examples from solving facial shadow removal problems via IALM and  ML-IALM algorithms on cropped \textbf{Yale B} database ($ 96\times 84\times 2414$). We run both IALM and ML-IALM for fixed five seconds. With each image we also report the respective rank of the low rank component and the feasibility gap (FG): $\Vert \mathbf{D}-\mathbf{L}-\mathbf{S}\Vert_F / \Vert\mathbf{D}\Vert_F$.}
\label{fig:exp-face-ialm}
\end{figure*}

A brief examination of the Table \ref{fig:exp-face-ialm} reveals that ML-IALM produces much better separation for each subject, with a more accurate rank $10$ of the low-rank component and a five times smaller feasibility gap.

For the shadow removal problem as well, we test the Frank-Wolfe methods on the noisy data with $75\%$ contaminated entries. Both FWT and ML-FWT run until convergence with $10^{-3}$ tolerance and record CPU times (seconds) and the achieved rank of the low-rank component and sparsity of the sparse component. The results of all $35$ subjects are reported in Table \ref{t:exp-face-fwt}. In all experiments we used up to $4$ levels of coarse models. Here in all experiments both methods achieved similar objective values and sparsity values, as expected. However ML-FWT is not only twice faster, but it also achieves a much better rank of the low-rank component. In fact, FWT fails to solve the problem, since the low-rank component is essentially full rank.

\begin{table}
\centering
\begin{tabular}{|c|c|c|c|c|c|c|c|c|}
\hline
\multirow{2}{*}{problem} & \multicolumn{4}{|c|}{FWT} & \multicolumn{4}{|c|}{ML-FWT} \\
\cline{2-9}
& sec & $f^\star$ & $\rank(\mathbf{L}^\star)$ & $\sparsity(\mathbf{S}^\star)$ & sec & $f^\star$ & $\rank(\mathbf{L}^\star)$ & $\sparsity(\mathbf{S}^\star)$ \\
\hline 	
yaleB01 & 83 & 0.001 & 65 & 0.71 & 41 & 0.001 & 8 & 0.73 \\
\hline
yaleB02 & 71 & 0.00098 & 65 & 0.71 & 39 & 0.00098 & 8 & 0.74 \\
\hline
yaleB03 & 75 & 0.00099 & 65 & 0.71 & 40 & 0.00099 & 8 & 0.73 \\
\hline
yaleB04 & 72 & 0.001 & 65 & 0.72 & 42 & 0.001 & 8 & 0.73 \\
\hline
yaleB05 & 61 & 0.00096 & 65 & 0.7 & 33 & 0.00094 & 8 & 0.73 \\
\hline
yaleB06 & 77 & 0.001 & 65 & 0.72 & 48 & 0.001 & 8 & 0.73 \\
\hline
yaleB07 & 78 & 0.001 & 65 & 0.72 & 42 & 0.00099 & 8 & 0.73 \\
\hline
yaleB08 & 75 & 0.00099 & 65 & 0.71 & 44 & 0.00098 & 8 & 0.73 \\
\hline
yaleB09 & 84 & 0.001 & 65 & 0.71 & 48 & 0.00097 & 8 & 0.73 \\
\hline
yaleB10 & 67 & 0.00099 & 65 & 0.72 & 45 & 0.001 & 8 & 0.73 \\
\hline
yaleB11 & 75 & 0.00098 & 60 & 0.71 & 41 & 0.00097 & 7 & 0.73 \\
\hline
yaleB12 & 79 & 0.00099 & 59 & 0.71 & 41 & 0.00097 & 7 & 0.73 \\
\hline
yaleB13 & 60 & 0.00096 & 60 & 0.71 & 35 & 0.00097 & 7 & 0.74 \\
\hline
yaleB15 & 85 & 0.001 & 63 & 0.71 & 43 & 0.00097 & 8 & 0.73 \\
\hline
yaleB16 & 77 & 0.00098 & 62 & 0.7 & 47 & 0.001 & 7 & 0.73 \\
\hline
yaleB17 & 66 & 0.00098 & 63 & 0.71 & 42 & 0.00099 & 8 & 0.73 \\
\hline
yaleB18 & 85 & 0.001 & 63 & 0.71 & 45 & 0.00099 & 8 & 0.73 \\
\hline
yaleB19 & 77 & 0.001 & 64 & 0.71 & 48 & 0.001 & 8 & 0.73 \\
\hline
yaleB20 & 73 & 0.00099 & 64 & 0.7 & 43 & 0.001 & 8 & 0.73 \\
\hline
yaleB21 & 75 & 0.00098 & 64 & 0.7 & 43 & 0.00099 & 8 & 0.73 \\
\hline
yaleB22 & 88 & 0.00098 & 64 & 0.69 & 44 & 0.00093 & 8 & 0.72 \\
\hline
yaleB23 & 63 & 0.00098 & 64 & 0.71 & 45 & 0.001 & 8 & 0.73 \\
\hline
yaleB24 & 74 & 0.00099 & 64 & 0.71 & 49 & 0.001 & 8 & 0.73 \\
\hline
yaleB25 & 75 & 0.00099 & 64 & 0.7 & 48 & 0.001 & 8 & 0.73 \\
\hline
yaleB26 & 77 & 0.001 & 64 & 0.71 & 45 & 0.00099 & 8 & 0.73 \\
\hline
yaleB27 & 61 & 0.00097 & 64 & 0.69 & 40 & 0.00096 & 8 & 0.72 \\
\hline
yaleB28 & 61 & 0.00096 & 64 & 0.69 & 40 & 0.00097 & 8 & 0.73 \\
\hline
yaleB29 & 71 & 0.00099 & 64 & 0.7 & 44 & 0.00099 & 8 & 0.73 \\
\hline
yaleB30 & 72 & 0.00098 & 64 & 0.71 & 43 & 0.00099 & 8 & 0.73 \\
\hline
yaleB31 & 78 & 0.001 & 64 & 0.71 & 48 & 0.001 & 8 & 0.73 \\
\hline
yaleB32 & 60 & 0.00096 & 64 & 0.7 & 37 & 0.00097 & 8 & 0.72 \\
\hline
yaleB33 & 79 & 0.00099 & 64 & 0.7 & 44 & 0.00099 & 8 & 0.73 \\
\hline
yaleB34 & 78 & 0.00099 & 64 & 0.7 & 37 & 0.00097 & 8 & 0.73 \\
\hline
yaleB35 & 75 & 0.00099 & 64 & 0.7 & 45 & 0.00099 & 8 & 0.73 \\
\hline
yaleB36 & 72 & 0.00099 & 64 & 0.71 & 42 & 0.00098 & 8 & 0.73 \\
\hline
\end{tabular}
\caption{CPU times (in seconds) after solving shadow removal problems up to a fixed tolerance using the standard Frank-Wolfe Thresholding algorithm and its multilevel variant. For all experiments we used 2 levels for the multilevel algorithm.}
\label{t:exp-face-fwt}
\end{table}

\section{Conclusion}
In this paper, we presented two multilevel algorithms for solving problems modelled as robust principle component analysis or compressive principle component pursuit optimisation problems. The first algorithm is a multilevel variant of the well-known inexact augmented Lagrange method (or more generally, ADMM), called ML-IALM. We proved that ML-IALM converges to an approximate solution, with approximation error being small for many computer vision problems, including those studied here. To the best of our knowledge this is the first time when an ADMM with approximate steps was proven to converge.
Our second algorithm is a multilevel variant of the well known Frank-Wolfe method modified to be most efficient for CPCP problems. We showed that this multilevel algorithm also converges to the solution of the CPCP problem with the same rate as its standard counterpart, while having much lower per iteration complexity. We tested both methods on various synthetic and real life problems. The results clearly show that the multilevel algorithms are not only several times faster (especially on larger problems), but also can often solve problems that their standard counterparts cannot.

{
\bibliographystyle{plain}
\bibliography{main}
}

\end{document}